\newtheorem{thm}{Theorem}[section]
\newtheorem{lem}[thm]{Lemma}
\newtheorem{cor}[thm]{Corollary}
\newtheorem{prop}[thm]{Proposition}
\theoremstyle{definition}
\newtheorem{example}[thm]{Example}
\theoremstyle{definition}
\newtheorem{examples}[thm]{Examples}
\theoremstyle{definition}
\newtheorem{defn}[thm]{Definition}
\theoremstyle{definition}
\newcommand{\mc}[1]{\mathcal{#1}}
\newcommand{\e}[1]{\emph{#1}}
\newcommand{\la}{\langle}
\newcommand{\ra}{\rangle}
\newcommand{\tr}{\mathrm{tr}}
\newcommand{\rmv}[1]{}
\newcommand{\hs}{\hskip10pt}
\newcommand{\LO}{L^1(G)}
\newcommand{\LOQ}{L^1(\mathbb{G})}
\newcommand{\LOQH}{L^1(\widehat{\mathbb{G}})}
\newcommand{\LTQ}{L^2(\mathbb{G})}
\newcommand{\LI}{L^{\infty}(G)}
\newcommand{\LIQ}{L^{\infty}(\mathbb{G})}
\newcommand{\BH}{\mc{B}(H)}
\newcommand{\BLTQ}{\mc{B}(L^2(\mathbb{G}))}
\newcommand{\TCQ}{\mc{T}(L^2(\mathbb{G}))}
\newcommand{\vphi}{\varphi}
\newcommand{\Lphi}{\Lambda_\varphi}
\newcommand{\al}{\alpha}
\newcommand{\be}{\beta}
\newcommand{\lm}{\lambda}
\newcommand{\Lphis}{\Lambda_{\varphi\ten\varphi}}
\newcommand{\Gam}{\Gamma}
\newcommand{\om}{\omega}
\newcommand{\ten}{\otimes}
\newcommand{\oten}{\overline{\otimes}}
\newcommand{\pten}{\widehat{\otimes}}
\newcommand{\hten}{\otimes^h}
\newcommand{\ehten}{\otimes^{eh}}
\newcommand{\whten}{\otimes^{w^*h}}
\newcommand{\id}{\textnormal{id}}
\newcommand{\h}[1]{\widehat{#1}}
\newcommand{\Irr}{\mathrm{Irr}(\mathbb{G})}
\providecommand{\norm}[1]{\lVert#1\rVert}
\newcommand{\A}{\mathcal{A}}
\newcommand{\G}{\mathbb{G}}
\newcommand{\C}{\mathbb{C}}
\newcommand{\N}{\mathbb{N}}
\newcommand{\Amod}{A\hskip2pt\mathrm{mod}}
\newcommand{\modA}{\mathrm{mod}\hskip2pt A}
\newcommand{\bI}{I}
\newcommand{\ignore}[1]{{  }}
\begin{document}

\title[]{A new duality via the Haagerup tensor product}
\author{Mahmood Alaghmandan}
\email{mahmood.alaghmandan@carleton.ca}
\author{Jason Crann}
\email{jason.crann@carleton.ca}
\author{Matthias Neufang}
\email{matthias.neufang@carleton.ca}
\address{School of Mathematics and Statistics, Carleton University, Ottawa, ON, Canada H1S 5B6}

\keywords{Operator spaces; module Haagerup tensor product; duality; compact quantum groups.}
\subjclass[2010]{Primary 46B10, 46B28, 46L07, 22D35; Secondary 22D15, 46L89.}

\begin{abstract} We initiate the study of a new notion of duality defined with respect to the module Haagerup tensor product. This notion not only recovers the standard operator space dual for Hilbert $C^*$-modules, it also captures quantum group duality in a fundamental way. We compute the so-called Haagerup dual for various operator algebras arising from $\ell^p$ spaces. In particular, we show that the dual of $\ell^1$ under \textit{any} operator space structure is $\min\ell^\infty$. In the setting of abstract harmonic analysis we generalize a result of Varopolous by showing that $C(\G)$ is an operator algebra under convolution for any compact Kac algebra $\G$. We then prove that the corresponding Haagerup dual $C(\G)^h=\ell^\infty(\h{\G})$, whenever $\h{\G}$ is weakly amenable. Our techniques comprise a mixture of quantum group theory and the geometry of operator space tensor products.
\end{abstract}

\begin{spacing}{1.0}

\maketitle

\section{Introduction}

Let $A$ be a completely contractive Banach algebra. We investigate a new form of duality by considering
$$A^h:=(A\hten_A A)^*,$$
where $\hten_A$ is the module Haagerup tensor product. For a $C^*$-algebra, this notion recovers the standard operator space dual $A^h=A^*$, while for the convolution algebra $\LO$ of a locally compact group $G$, we have $\LO^h=M_{cb}A(G)$ \cite{Gil}, the algebra of completely bounded multipliers of the Fourier algebra $A(G)$. From the quantum group perspective, $A(G)$ is the dual object to $\LO$. Indeed, for abelian groups, $A(G)\cong L^1(\h{G})$, where $\h{G}$ is the Pontryagin dual of $G$. Thus, on the one hand, for a $C^*$-algebra -- the prototypical operator space --  our notion recovers the operator space dual, while for convolution algebras $\LO$, it is sensitive enough to detect and reveal the underlying quantum group structure, reflected through quantum group duality. The Haagerup tensor product is instrumental in this regard: if one instead considers the operator space projective tensor product $\pten$, then for any $C^*$-algebra $A^{\wedge}:=(A\pten_A A)^*\cong A^*$, and for any locally compact group $G$, $\LO^{\wedge}:=(\LO\pten_{\LO}\LO)^*\cong(\LO)^*=\LI$. 

Building on the foundational work of Rieffel \cite{Ri}, Cigler, Losert and Michor \cite{CL} studied this type of duality in the setting of Banach space projective tensor products, via
$$X^\gamma:=\mc{B}_A(X,A^*)=(X\ten^{\gamma}_A A)^*,$$
where $X$ is a Banach module over a Banach algebra $A$. Although this notion differs from the algebraic notion of dual module, it is a natural object to consider from the functional analytic perspective. In particular, it coincides with the Banach space dual when $A=\C$. This type of duality was further studied by Grosser, who computed the dual modules for a variety of examples \cite{G}. 

An analogous notion of duality exists for operator modules over completely contractive Banach algebras $A$, where
$$X^{\wedge}:=\mc{CB}_A(X,A^*)=(X\pten_A A)^*.$$
In this context, it is also natural to consider the module Haagerup tensor product $\hten_A$. Indeed, the motivation above suggests that the ``Haagerup dual'' of $X$, 
$$X^h:=(X\hten_A A)^*,$$
detects a finer, intrinsic form of duality.


After a preliminary section we present the general notion of Haagerup duality in Section \ref{s:3} and establish a few basic tools we will use in the sequel. Section \ref{s:l-p} concerns the computation of Haagerup duals for a variety of operator space structures on $\ell^p$ spaces under pointwise product, which may be seen as Haagerup dual analogues of a variety of results from \cite{CL,G}. In particular, we prove that    
$$(o\ell^1(\bI))^h=\min\ell^\infty(\bI),$$
regardless of the operator algebra structure $o\ell^1(\bI)$. In stark contrast, we show that the Haagerup duals of $\ell^2(I)$ depend crucially on the given operator space structure. Here, our techniques rely on the operator space geometry of $\ell^p$ spaces from \cite{BL2}.

By a classical result of Varopolous \cite{V}, the space $C(G)$ of continuous functions on a compact group $G$ is an operator algebra under convolution. It is also known to experts that the reduced group $C^*$-algebra $C^*_\lm(G)$ of a discrete group $G$ is an operator algebra under pointwise multiplication. In Section \ref{s:5} we unify these results by showing that $C(\G)$ is an operator algebra under convolution for any compact Kac algebra $\G$. We then prove that the corresponding Haagerup dual satisfies
$$C(\G)^h=\ell^\infty(\h{\G}),$$
whenever $\h{\G}$ is weakly amenable. In particular, for any weakly amenable discrete group $G$, 
$$C^*_\lm(G)^h=\ell^\infty(G).$$
Here, again, we see a reflection of quantum group duality through our new notion. 

Viewing $C^*_\lm(G)$ has an operator module over the Fourier algebra $A(G)$, we finish Section \ref{s:5} by showing that for any weakly amenable discrete group $G$, the corresponding dual $C^*_\lm(G)^h=\ell^2_r(G)$. Thus, we see that the underlying module structure can have a significant impact on the operator space geometry of the dual. In this section our techniques rely on Peter--Weyl theory, the extended Haagerup tensor product, and the operator space structure of column and row Hilbert spaces.

Several natural and interesting questions are generated by this paper, such as analogues of reflexivity, etc., which will be postponed to future work.

\section{Notation and Preliminaries}

Throughout the paper we adopt standard notation from operator space theory, referring the reader to \cite{ER} for details. We let $\ten$ and $\pten$ denote the algebraic and operator space projective tensor products, respectively. On a Hilbert space $H$, we let $\mc{K}(H)$, $\mc{T}(H)$ and $\BH$ denote the spaces of compact, trace class, and bounded operators, respectively. We also let $H_r$ and $H_c$ denote the row and column operator space structures on $H$, respectively. 

For a linear map $\varphi :  X \to  Y$  and $n\in \mathbb{N}$, we let
$\varphi^{(n)} : M_n(  X)\to M_{n}( Y)$ denote its $n^{th}$ amplification given by
$$\varphi^{(n)}\left([x_{ij}]\right) = \left[ \varphi(x_{ij})\right].$$
Given operator spaces $X$ and $Y$ the Haagerup tensor norm of $ u=[u_{ij}] \in M_n(X\otimes Y)$ is defined by 
\[
\displaystyle \Vert u\Vert _h = \inf
\{\Vert x\Vert _{n,k}\ \Vert y\Vert _{k,n}\},
\]
where the infimum is taken over $ k \in {\mathbb{N}}$,  $ x=[x_{ij}]  \in M_{n,k}(X)$,  $ y=[y_{ij}] \in M_{k,n}(Y)$ such that $ u$ is the matrix inner product
\[
u= x\odot y= \left[ \sum_{\ell=1}^k x_{i\ell}\otimes y_{\ell j} \right]
\] 
The Haagerup tensor product $ X \otimes^h Y$  is the completion of $ X \otimes Y $ with respect to this operator space tensor norm.
For $C^*$-algebras $A$ and $B$, the Haagerup norm on $A\ten B$ satisfies
\[
\| u\|_h := \inf \left\{
\left\| \sum_{k=1}^n a_k a_k^* \right\|^{\frac{1}{2}} \left\| \sum_{k=1}^n b_k^* b_k \right\|^{\frac{1}{2}}: \;  
u = \sum_{k=1}^n a_k \otimes b_k \right\}.
\] 

The \emph{extended Haagerup tensor product} $X \otimes^{eh} Y$ of operator spaces $X$ and $Y$ is defined by the subspace of $(X^* \otimes^h Y^*)^*$ corresponding to the completely bounded bilinear maps from $X^* \times Y^*\rightarrow \mathbb{C}$ which are separately weak* continuous. If $X$ and $Y$ are duals of operator spaces $X_*$ and $Y_*$, respectively, then $X \otimes^{eh} Y$ coincides with the \emph{weak* Haagerup tensor product} \cite{BS}. In this case, 
\[
X \otimes^{eh} Y = (X_* \otimes^h Y_*)^*=X\whten Y
\]
complete isometrically. 

A liner mapping $\vphi:X\rightarrow Y$ factors through a row Hilbert space if there is a Hilbert space $H$ and completely bounded maps $r:X\rightarrow H_r$ and $s:H_r\rightarrow Y$ for which the following diagram commutes.

\begin{equation*}
\begin{tikzcd}
&H_r \arrow[rd, "s"]\\
X \arrow[ru, "r"] \arrow[rr, "\vphi"] & & Y
\end{tikzcd}
\end{equation*}

We let $\Gam^{2,r}(X,Y)$ denote the space of such mappings. Given $\vphi=[\vphi_{ij}]\in M_n(\Gam^{2,r}(X,Y))$, the associated mapping $\vphi:X\rightarrow M_n(Y)$ satisfies

\begin{equation*}
\begin{tikzcd}
&M_{n,1}(H_r) \arrow[rd, "s_{n,1}"]\\
X \arrow[ru, "r"] \arrow[rr, "\vphi"] & & M_n(Y)
\end{tikzcd}
\end{equation*}
where $s:H_r\rightarrow M_{1,n}(Y)$. The norm $\gamma^{2,r}(\vphi)=\inf\{\norm{r}_{cb}\norm{s}_{cb}\}$ then determines an operator space structure on $\Gam^{2,r}(X,Y)$. It is well-known that $\Gam^{2,r}(X,Y^*)\cong(X\ten^h Y)^*$ completely isometrically, via
\begin{equation}\label{e:Haa}\la\vphi, x\ten y\ra=\la\vphi(x),y\ra, \ \ \ x\in X, \ y\in Y.\end{equation}
A similar construction exists for column Hilbert spaces, and one has $\Gam^{2,c}(X,Y^*)\cong(Y\hten X)^*$.

A Banach algebra $A$ equipped with an operator space structure said to be
\emph{completely contractive}  if the mapping 
$$m_A:A\pten A\ni a\ten b\mapsto ab\in A$$
is completely contractive. It is known that a completely contractive Banach algebra is completely isomorphic to an operator algebra if and only if the mapping $m_A$ extends to a completely bounded map on ${A}\otimes_h {A}$ \cite[Theorem~5.2.1]{BL}.  

An operator space $X$ is a \emph{left operator $A$-module} if it is a left Banach $A$-module for which the module map
$$m_X:A\pten X\rightarrow X$$
is completely contractive (\cite[3.1.3]{BL}). We let $\Amod$ denote the category of left operator $A$-modules with completely bounded left $A$-module maps. We say that $X\in\Amod$ is \textit{essential} if $\la A\cdot X\ra=X$, where $\la\cdot\ra$ denotes the closed linear span. An object $X\in\Amod$ is said to be a left \textit{$h$-module} if $m_X$ extends to a complete contraction on $A\hten X$. Right operator $A$-modules and operator $A$-bimodules are defined similarly, along with the corresponding $h$-module notions. 

If $X$ and $Y$ are right and left operator $A$-modules, respectively, then the \textit{module tensor product} of $X$ and $Y$ is the quotient space $X\pten_{A}Y:=X\pten Y/N$, where
\begin{equation}\label{e:1}N=\la x\cdot a\ten y-x\ten a\cdot y\mid x\in X, \ y\in Y, \ a\in A\ra.\end{equation}
We also have the \emph{module Haagerup tensor} product $X \otimes^h_A Y=X\hten Y/N_h$, where $N_h$ is defined as in (\ref{e:1}) but with the appropriate norm closure. It follows that
\begin{equation}\label{eq:eh-A-module}
(X \otimes^h_A Y)^* = _A\Gamma^{2,c}(Y,X^*) = \Gamma^{2,r}_A(X,Y^*)
\end{equation}
where $_A\Gamma^{2,c}(Y,X^*)$ is the subspace of left $A$-module maps in $\Gamma^c(Y,X^*)$ and similarly $\Gamma^{2,r}_A(X,Y^*)$ is the subspace of right $A$-module maps in $\Gamma^{2,r}(X,Y^*)$.

\section{Haagerup Duality for Operator Modules}\label{s:3}

\begin{defn}\label{d:Haagerup-dual}
Let ${A}$ be a completely contractive Banach algebra and $X$ be in $\Amod$. The \emph{Haagerup dual} of $X$, denoted $X^h$, is the dual space $(A \otimes^h_A X)^*$ furnished with the dual operator space structure. A similar definition applies to right modules.
\end{defn}

Note that  applying \eqref{eq:eh-A-module}, we immediately have completely isometric identifcations
\begin{equation}\label{eq:H-A-X-cr}
X^h = _A\Gamma^{2,c}(X, A^*) = \Gamma^{2,r}_A (A, X^*)
\end{equation} 
Moreover, since $\Gamma^{2,r}$ is a mapping space, $X^h$ inherits a canonical right $A$-module structure, given by 
$$(\vphi\cdot a)(b)=\vphi(b)\cdot a, \ \ \ a\in A, \ \vphi\in X^h.$$
One may therefore view $(\cdot)^h$ as a contravariant functor $\Amod\rightarrow\modA$, where a morphism $f:X\rightarrow Y$ transforms to $f^h:Y^h\ni\vphi\mapsto f^*\circ\vphi\in X^h$.

\begin{example} As mentioned above, one of the motivations for considering this notion comes from quantum group duality. In \cite{ACN} it was shown that for any 
locally compact quantum group $\G$ whose dual $\widehat{\G}$ is either QSIN or has bounded degree, then 
$$L^1(\G)^h\cong M_{cb}(\LOQH)$$ 
completely isomorphically, and weak*-weak* homeomorphically. Thus, for a large class of quantum group convolution algebras $\LOQ$, including $\LO$ for any locally compact group $G$, the Haagerup dual $L^1(\G)^h$ captures the dual object $\h{\G}$ in a fundamental way.
\end{example}

The following tool will be used heavily in the sequel when computing specific examples. The proof technique is similar to \cite[Corollary 7.4]{C}.

\begin{prop}\label{p:h-module-mb-ai}
Let ${A}$ be a completely contractive Banach algebra with a cb-multiplier bounded approximate identity. If $X\in\Amod$ is a left $h$-module, then $\mathrm{Ker}(m_X)=N_X$. In particular, if $m_X: {A} \otimes_h X \rightarrow \mathrm{Im}(m_X)$ is a complete quotient map onto its image, then $X^h\cong\mathrm{Im}(m_X)^*$ completely isometrically and weak*-weak* homeomorphically.
\end{prop}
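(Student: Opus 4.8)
The plan is to prove the two inclusions of $\mathrm{Ker}(m_X)=N_X$ separately, the forward one being immediate. Writing $q\colon A\hten X\to A\hten_A X=(A\hten X)/N_X$ for the quotient map, one checks $m_X(ba\ten x-b\ten a\cdot x)=b\cdot(a\cdot x)-b\cdot(a\cdot x)=0$ by associativity of the module action, so $m_X$ vanishes on the generators of $N_X$ and, by continuity of the completely contractive map $m_X$ on $A\hten X$ (which is where the $h$-module hypothesis enters), on all of $N_X$; hence $N_X\subseteq\mathrm{Ker}(m_X)$ and $m_X$ factors through $q$. The content is the reverse inclusion: if $m_X(u)=0$ then $q(u)=0$. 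I would establish this by producing a net of completely bounded maps $T_\alpha\colon A\hten X\to A\hten_A X$ converging to $q$ in the point-norm topology while simultaneously factoring through $m_X$.

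Let $(e_\alpha)$ be the cb-multiplier bounded approximate identity, so that the left multiplications $L_{e_\alpha}\colon A\to A$, $b\mapsto e_\alpha b$, are completely bounded with $C:=\sup_\alpha\|L_{e_\alpha}\|_{cb}<\infty$ and $e_\alpha b\to b$ for every $b\in A$. Set $T_\alpha:=q\circ(L_{e_\alpha}\ten\id_X)$. By functoriality of the Haagerup tensor product under completely bounded maps, $\|L_{e_\alpha}\ten\id_X\|_{cb}\le\|L_{e_\alpha}\|_{cb}$, so $\sup_\alpha\|T_\alpha\|_{cb}\le C$; this uniform bound is exactly what the multiplier hypothesis buys us, and it is the reason one must work with $L_{e_\alpha}$ rather than with $\|e_\alpha\|$, which need not be bounded. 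On an elementary tensor one computes $T_\alpha(a\ten x)=[e_\alpha a\ten x]$, whereas $q(a\ten x)=[a\ten x]$; their difference $[(e_\alpha a-a)\ten x]$ has norm at most $\norm{e_\alpha a-a}\,\norm{x}\to 0$. Thus $T_\alpha\to q$ pointwise on the algebraic tensor product, and the uniform bound upgrades this to point-norm convergence on all of $A\hten X$ by the standard density argument.

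The key identity is that $T_\alpha$ factors through $m_X$: using the defining relation of $N_X$ to move $a$ across the tensor sign, $[e_\alpha a\ten x]=[e_\alpha\ten a\cdot x]=q\bigl(e_\alpha\ten m_X(a\ten x)\bigr)$, so $T_\alpha$ agrees on elementary tensors, hence (by continuity) everywhere, with $q\circ\sigma_\alpha\circ m_X$, where $\sigma_\alpha\colon X\to A\hten X$, $y\mapsto e_\alpha\ten y$, is bounded for each fixed $\alpha$. Consequently $T_\alpha$ annihilates $\mathrm{Ker}(m_X)$, and for $u\in\mathrm{Ker}(m_X)$ we get $q(u)=\lim_\alpha T_\alpha(u)=0$, i.e. $u\in N_X$, completing $\mathrm{Ker}(m_X)=N_X$. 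I expect the main obstacle to be precisely the bookkeeping around these two descriptions of $T_\alpha$: the form $q\circ(L_{e_\alpha}\ten\id_X)$ is needed for the uniform cb-bound, while the form $q\circ\sigma_\alpha\circ m_X$ is needed to see that the maps kill the kernel, and one must verify they coincide despite $\sigma_\alpha$ having uncontrolled norm.

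For the final assertion, assume $m_X$ is a complete quotient map onto $\mathrm{Im}(m_X)$. Since $\mathrm{Ker}(m_X)=N_X$, the canonical factorization yields an induced map $\widetilde{m}_X\colon A\hten_A X=(A\hten X)/N_X\to\mathrm{Im}(m_X)$ that is a completely isometric isomorphism (a complete isometry because $m_X$ is a complete quotient map onto its image, and surjective by construction). Dualizing, $(\widetilde{m}_X)^*\colon\mathrm{Im}(m_X)^*\to(A\hten_A X)^*=X^h$ is a completely isometric isomorphism; being the adjoint of a bounded map with bounded inverse, both it and its inverse are weak*-weak* continuous, so it is a weak*-weak* homeomorphism. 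This gives $X^h\cong\mathrm{Im}(m_X)^*$ completely isometrically and weak*-weak* homeomorphically, as required.
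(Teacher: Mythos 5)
Your proposal is correct and follows essentially the same route as the paper: the maps $T_\alpha=q\circ(L_{e_\alpha}\ten\id_X)$ are exactly the paper's computation $(l_{e_\alpha}\ten\id)(y)+N_X=e_\alpha\ten m_X(y)+N_X$ recast as operators, with the cb-multiplier bound giving the uniform cb-norms needed to pass from elementary tensors to all of $A\hten X$, and the final dualization of the induced complete isometry is identical. The only difference is presentational: you make explicit the density/uniform-boundedness step and the two descriptions of $T_\alpha$, which the paper leaves implicit.
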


\begin{proof}
Let $(e_\alpha)_\alpha$ be a cb-multiplier bounded approximate identity for $A$, that is, $\norm{e_\alpha a - a}_{{A}}\rightarrow 0$ for all $a\in A$ and $\sup_\alpha \norm{e_\alpha}_{M_{cb}{A}}<\infty$. As $N_X \subseteq \mathrm{Ker}(m_X)$, where
$$N_X:=\la a\cdot b\ten x-a\ten b\cdot x\mid a,b\in A, \ x\in X\ra,$$
multiplication induces a canonical map $\widetilde{m_X}:A\hten_A X\rightarrow X$. Letting $l_a:A\ni b\mapsto ab\in A$ denote left multiplication by $a\in A$, on elementary tensors we have
$$(l_a\ten \id)(b\ten x)+N_X=a\ten b\cdot x+N_X=a\ten m_X(b\ten x)+N_X, \ \ \ a,b\in A, \ x\in X.$$
It follows that $(l_a\ten\id)(y)+N_X=a\ten m_X(y)+N_X$ for all $a\in A$ and $y\in A\hten X$. Thus, if $y \in \mathrm{Ker}(m_X)$ we have
$$y+N_X=\lim_\alpha(l_{e_\alpha}\ten \id)y+N_X=\lim_{\alpha}e_\alpha\ten m_X(y)+N_X=N_X.$$
Thus, $N_X=\mathrm{Ker}(m_X)$. If, in addition, $m_X: {A} \otimes_h X \rightarrow \mathrm{Im}(m_X)$ is a complete quotient map, then $A\hten_A X\cong\mathrm{Im}(m_X)$ completely isometrically, and so $X^h\cong\mathrm{Im}(m_X)^*$ completely isometrically and weak*-weak* homeomorphically.
\end{proof}

\begin{cor}\label{c:bai}
Let ${A}$ be a completely contractive Banach algebra with a bounded approximate identity and let $X\in\Amod$ be an essential left $h$-module. Then $X^h\cong X^*$ completely isomorphically and weak*-weak* homeomorphically. In particular, if $A$ is completely isomorphic to an operator algebra with a bounded approximate identity (e.g., a $C^*$-algebra) then $A^h\cong{A}^*$.
\end{cor}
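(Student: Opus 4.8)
The plan is to deduce this from \propref{p:h-module-mb-ai}, the only genuine subtlety being that we now ask for a complete \emph{isomorphism} rather than the complete isometry afforded by that result. First I would observe that any bounded approximate identity $(e_\alpha)_\alpha$ with $\sup_\alpha\norm{e_\alpha}\le C$ is automatically a cb-multiplier bounded approximate identity: complete contractivity of $m_A$ forces the left multiplication $l_{e_\alpha}:b\mapsto e_\alpha b$ to satisfy $\norm{l_{e_\alpha}}_{cb}\le\norm{e_\alpha}\le C$, since $l_{e_\alpha}=m_A\circ r_{e_\alpha}$ with $r_{e_\alpha}:b\mapsto e_\alpha\ten b$ of cb-norm at most $\norm{e_\alpha}$. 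Hence $\sup_\alpha\norm{e_\alpha}_{M_{cb}A}<\infty$, and \propref{p:h-module-mb-ai} applies to give $\mathrm{Ker}(m_X)=N_X$; in particular the induced map $\widetilde{m_X}:A\hten_A X\to X$ is injective. Essentiality together with the bounded approximate identity then yields, via Cohen's factorization theorem, that $A\cdot X=X$, so $\mathrm{Im}(m_X)=X$ and $\widetilde{m_X}$ is a completely bounded bijection onto $X$.

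The main obstacle is that a completely bounded bijection of operator spaces need not have a completely bounded inverse, so the injectivity and surjectivity of $\widetilde{m_X}$ alone do not yet give a complete isomorphism. To manufacture a completely bounded inverse I would bring in the operator-module version of Cohen's factorization theorem for the \emph{projective} module tensor product (cf.\ \cite{BL}), which asserts that the canonical multiplication map $\theta:A\pten_A X\to X$ is a complete isomorphism whenever $A$ has a bounded approximate identity and $X$ is essential. Since the identity on algebraic tensors is completely contractive from the projective to the Haagerup tensor norm, it descends to a completely bounded map $q:A\pten_A X\to A\hten_A X$ satisfying $\widetilde{m_X}\circ q=\theta$. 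Setting $s:=q\circ\theta^{-1}:X\to A\hten_A X$, which is completely bounded, we obtain $\widetilde{m_X}\circ s=\id_X$; combined with the injectivity of $\widetilde{m_X}$ established above, this forces $s\circ\widetilde{m_X}=\id$ as well, so that $\widetilde{m_X}$ is a complete isomorphism with completely bounded inverse $s$.

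Finally, dualizing $\widetilde{m_X}:A\hten_A X\to X$ produces a complete isomorphism $\widetilde{m_X}^*:X^*\to(A\hten_A X)^*=X^h$, which is automatically a weak*--weak* homeomorphism as the adjoint of a completely bounded isomorphism; this is precisely the asserted identification $X^h\cong X^*$. The ``in particular'' statement then follows by taking $X=A$ as a module over itself: a bounded approximate identity makes $A$ essential, and $A$ is a left $h$-module over itself exactly when $m_A$ extends to a complete contraction on $A\hten A$, i.e.\ exactly when $A$ is completely isomorphic to an operator algebra by \cite[Theorem~5.2.1]{BL} (in particular for any $C^*$-algebra, which moreover carries a contractive approximate identity), whence $A^h\cong A^*$.
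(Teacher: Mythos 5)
Your proposal is correct and follows essentially the same route as the paper: both combine \propref{p:h-module-mb-ai} (injectivity of $\widetilde{m_X}$ on $A\hten_A X$) with the operator-module Cohen factorization for the projective tensor product (the paper cites \cite[Proposition 6.4]{D}) and the factorization $\widetilde{m_X}\circ q=\theta$ through the canonical map $q:A\pten_A X\to A\hten_A X$ to manufacture the completely bounded inverse. The only difference is expository: you spell out two points the paper leaves implicit, namely that a bounded approximate identity is automatically cb-multiplier bounded, and that injectivity plus a completely bounded right inverse forces a two-sided completely bounded inverse.
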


\begin{proof}
Let $(e_\alpha)$ be a bounded approximate identity for $A$ with $\sup_\alpha\norm{e_\alpha}\leq C$. It follows that multiplication induces a complete isomorphism $m_X:A\pten_AX\cong \la A\cdot X\ra=X$, where the $cb$-norm of the inverse map is bounded by $C$ \cite[Proposition 6.4]{D}. Since $X$ is a left $h$-module, from Proposition \ref{p:h-module-mb-ai} and the commutative diagram

\begin{equation*}
\begin{tikzcd}
A\pten_A X\arrow[rr, "\widetilde{m_X}"]\arrow[rd] &   &X \\
&A\hten_A X \arrow[ru, "\widetilde{m_X}"] &
\end{tikzcd}
\end{equation*}
it follows that $\widetilde{m_X}: {A} \otimes^h_{A} {X} \rightarrow {X}$ is a complete isomorphism with the same bound on the $cb$-inverse. In particular, if $A$ is completely isomorphic to an operator algebra then by \cite[Theorem~5.2.1]{BL}, it is a left $h$-module over itself, so $A^h\cong A^*$.
\end{proof}


\section{$\ell^p$-spaces as operator algebras}\label{s:l-p}

In this section we compute the Haagerup duals of $\ell^p$-spaces seen as operator algebras under pointwise multiplication. It is known that for any index set $I$, the space $\ell^1(\bI)$ becomes an operator algebra under pointwise multiplication with \textit{any} operator space structure, say $o\ell^1(\bI)$. Moreover,
\[
m^{o}: o\ell^1(\bI) \otimes^h o\ell^1(\bI) \rightarrow o\ell^1(\bI)
\]
is a complete contraction \cite[Theorem 3.1]{BL2}. Also, the space $\max \ell^p(\bI)$ is an operator algebra under pointwise multiplication if and only if $1\leq p\leq 2$ \cite[Theorem 3.4]{BL2}. 

\begin{prop}\label{p:l-1}
For any operator space structure $o\ell^1(\bI)$,  $o\ell^1(\bI)^h=\min\ell^\infty(\bI)$.
\end{prop}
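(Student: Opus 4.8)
The plan is to compute $o\ell^1(\bI)^h$ through the mapping-space description \eqref{eq:H-A-X-cr}, namely $A^h = \Gamma^{2,r}_A(A,A^*)$ with $A = o\ell^1(\bI)$, and to reduce the determination of its matrix norms to a single concrete operator inequality. First I would unwind the module relations: since the $\delta_i$ are orthogonal idempotents ($\delta_i\delta_j = \delta_{ij}\delta_i$), the generators of $N_A$ force $\delta_i\ten\delta_j\in N_A$ for $i\neq j$, so $A\hten_A A$ is the closed span of the diagonal classes $\overline{\delta_i\ten\delta_i}$. Comparing the quotient norm with the contractive multiplication $m^o$ of \cite{BL2} shows $A\hten_A A\cong\ell^1(\bI)$ isometrically as a Banach space, whence $A^h\cong\ell^\infty(\bI)$ isometrically. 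Consequently every element of $M_n(A^h) = \mc{CB}(A\hten_A A, M_n)$ is determined by the matrices $C_i\in M_n$ it assigns to $\overline{\delta_i\ten\delta_i}$, and lifting along the complete quotient $q:A\hten A\to A\hten_A A$ identifies its matrix norm with $\|\Psi_C\|_{cb}$, where $\Psi_C: o\ell^1(\bI)\hten o\ell^1(\bI)\to M_n$ is the diagonal map $\delta_i\ten\delta_j\mapsto\delta_{ij}C_i$ (one checks directly that $\Psi_C$ annihilates $N_A$, hence $N_h$). Thus the proposition is equivalent to the single identity $\|\Psi_C\|_{cb} = \sup_i\|C_i\|_{M_n}$, the right-hand side being exactly the $M_n(\min\ell^\infty(\bI))$-norm.

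The lower bound is immediate, since $\|\Psi_C\|_{cb}\ge\|\Psi_C(\delta_i\ten\delta_i)\|_{M_n}/\|\delta_i\ten\delta_i\|_h\ge\|C_i\|$ because $\|\delta_i\ten\delta_i\|_h\le 1$. The heart of the argument, and the step I expect to be the main obstacle, is the matching upper bound $\|\Psi_C\|_{cb}\le\sup_i\|C_i\|$, which must hold uniformly over all operator space structures $o$. For this I would take $u\in M_m(o\ell^1(\bI)\hten o\ell^1(\bI))$ with a Haagerup factorization $u = x\odot y$, $x\in M_{m,k}(o\ell^1(\bI))$, $y\in M_{k,m}(o\ell^1(\bI))$, and expand in the canonical basis, writing $X^i = [x^i_{pl}]$ and $Y^i = [y^i_{lq}]$ for the scalar matrices of coordinates. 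A direct computation then gives $\Psi_C^{(m)}(u) = \sum_i (X^iY^i)\ten C_i$ in $M_m\ten M_n$.

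Choosing an SVD factorization $C_i = A_iB_i$ with $\|A_i\| = \|B_i\| = \|C_i\|^{1/2}$, I would factor $\sum_i(X^iY^i)\ten C_i = \mc{A}\mc{B}$ as the product of a row operator $\mc{A} = [\,X^i\ten A_i\,]_i$ and a column operator $\mc{B} = [\,Y^i\ten B_i\,]_i$. Then $\|\mc{A}\|^2 = \|\sum_i X^iX^{i*}\ten A_iA_i^*\|\le(\sup_i\|C_i\|)\,\|\sum_i X^iX^{i*}\|$, using $A_iA_i^*\le\|C_i\| I_n$, and symmetrically $\|\mc{B}\|^2\le(\sup_i\|C_i\|)\,\|\sum_i Y^{i*}Y^i\|$. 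The remaining point is to control $\|\sum_i X^iX^{i*}\|$ and $\|\sum_i Y^{i*}Y^i\|$ by $\|x\|^2$ and $\|y\|^2$ uniformly in $o$: here I would exploit that $\min\ell^1(\bI)$ is dominated by every operator space structure, so $\|x\|_{M_{m,k}(o\ell^1(\bI))}\ge\|x\|_{M_{m,k}(\min\ell^1(\bI))} = \sup_{|f_i|\le1}\|\sum_i f_iX^i\|$, and a Rademacher-averaging estimate gives $\|\sum_i X^iX^{i*}\|\le\|x\|_{\min}^2\le\|x\|_o^2$.

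Combining these bounds yields $\|\Psi_C^{(m)}(u)\|\le(\sup_i\|C_i\|)\,\|x\|\,\|y\|$; taking the infimum over Haagerup factorizations produces $\|\Psi_C\|_{cb}\le\sup_i\|C_i\|$, and together with the lower bound this gives the identity $\|\Psi_C\|_{cb} = \sup_i\|C_i\|$ at every matrix level, hence $A^h = \min\ell^\infty(\bI)$ completely isometrically and independently of $o$. The delicate ingredient throughout is precisely this uniform passage through the minimal structure, where the operator space geometry of $\ell^1$ from \cite{BL2} is what makes the row–column factorization bound insensitive to the chosen operator space structure.
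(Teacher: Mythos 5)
Your proof is correct, but it takes a genuinely different route from the paper's. Both arguments start from the same observation --- the module relations kill the off-diagonal tensors $\delta_i\ten\delta_j$, $i\neq j$, so that, with $A=o\ell^1(\bI)$, the space $A\hten_A A$ is the closed span of the diagonal classes and every matrix functional is one of your maps $\Psi_C$ --- but the decisive estimate is obtained on opposite sides of the duality. The paper works on the primal side: it first shows that multiplication on $\max\ell^1(\bI)\hten\max\ell^1(\bI)$ is a complete quotient map (combining the known complete-quotient property of $\max\ell^1(\bI)\pten\max\ell^1(\bI)\rightarrow\max\ell^1(\bI)$ with the contractivity of $\pten\rightarrow\hten$), then transfers this to an arbitrary $o\ell^1(\bI)$ by quoting \cite[Theorem 3.1]{BL2} (complete contractivity of $\bar m:\min\ell^1(\bI)\hten\min\ell^1(\bI)\rightarrow\max\ell^1(\bI)$) and lifting matrices through the $\max$ level, and finally identifies $N_{o\ell^1(\bI)}^{\perp}$ with $\ell^\infty(\bI)$ by the same module-relation computation you use. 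You instead work entirely on the dual side and prove the key inequality $\norm{\Psi_C}_{cb}\leq\sup_i\norm{C_i}$ by hand, via an SVD of the targets, a row--column factorization of $\sum_i(X^iY^i)\ten C_i$, and the Rademacher identity $\sum_i X^iX^{i*}=\mathbb{E}\bigl(\sum_i\epsilon_iX^i\bigr)\bigl(\sum_j\epsilon_jX^j\bigr)^*$ played against the $\min\ell^1(\bI)$ norm. Since that inequality is precisely the dual formulation of the paper's complete-quotient claim, you are in effect reproving the matrix-valued case of \cite[Theorem 3.1]{BL2} rather than citing it; this makes your proof more self-contained and isolates exactly where independence of the operator space structure enters (only the Banach-level inequality $\norm{x}_{\min}\leq\norm{x}_{o}$ is used). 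What the paper's route buys in exchange is brevity given the literature and reusability: the complete-quotient statement for $m_1$ is recycled verbatim in Corollary \ref{c:l-p} for $\max\ell^p(\bI)$, $1\leq p\leq 2$, and it hands over the weak*-weak* homeomorphism directly. One housekeeping point for your write-up: since $\bI$ may be infinite, the sums $\sum_iX^iX^{i*}$ and $\sum_i(X^i\ten A_i)(Y^i\ten B_i)$ need a word on convergence (entrywise absolute convergence because the entries of $x$ and $y$ lie in $\ell^1(\bI)$, then monotone limits of the finite partial-sum bounds), and the Rademacher estimate should be run over finite subsets of $\bI$ and passed to the limit; these are routine and do not affect correctness.
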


\begin{proof} We first prove the case for $o\ell^1(\bI)=\max\ell^1(\bI)$. Recall that $\hat{m} :\max\ell^1(\bI) \pten \max \ell^1(\bI) \rightarrow \max\ell^1(\bI)$ is a complete quotient map (see \cite[Example~5.7]{D}, for instance).
Clearly, the following diagram commutes.
 \[
 \xymatrix{
\max \ell^1(\bI) \pten \max \ell^1(\bI)  \ar[rd]_{\hat{m}} \ar[r]^{\iota} & \max \ell^1(\bI)\otimes^h \max \ell^1(\bI) \ar[d]^{m^{\max}} \\
       & \max\ell^1(\bI)       }
 \]
Since the inclusion $\iota$ is a complete contraction, it follows that $m^{max}$ is a complete quotient map. Consequently, applying Proposition~\ref{p:h-module-mb-ai},  
 \[
(\max \ell^1(\bI))^h=(\max\ell^1(I))^*=\min\ell^\infty(\bI).
 \]
Now for an arbitrary operator space structure $o\ell^1(\bI)$, let $i: \max\ell^1(\bI) \rightarrow  o \ell^1(\bI)$ and $i': o\ell^1(\bI) \rightarrow \min\ell^1(\bI)$ be the canonical complete contractions. In \cite[Theorem 3.1]{BL2}, it was shown that 
\[
\bar{m}: \min\ell^1(\bI) \otimes^h \min \ell^1(\bI) \rightarrow \max \ell^1(\bI)
\]
is a complete contraction, where $\bar{m}$ is the extension of pointwise multiplication.
One can easily show that the following diagram commutes.
\[
 \xymatrix{
\max \ell^1(\bI)  \otimes^h \max \ell^1(\bI)  \ar[rd]_{ {m}^{\max}} \ar[r]^{\quad i \otimes i} & o \ell^1(\bI)\otimes^h o \ell^1(\bI) \ar[r]^{ i'\otimes i'} & \min\ell^1(\bI)\otimes^h\min \ell^1(\bI)\ar[ld]^{\bar{m}} \ \\
       & \max\ell^1(\bI)     }
 \]
Now let $u \in M_n(\max \ell^1(\bI))$ for some positive integer $n$ with $\norm{u}_n < 1$. Since $m^{\max}$ is a complete quotient, there is some $v \in M_n (\max \ell^1(\bI)  \otimes^h \max \ell^1(\bI))$ with $\norm{v}_n < 1$ and $(m^{\max})^{(n)}(v)=u$. But $w=(i\otimes i)^{(n)} (v)$ is an element in $M_n(o \ell^1(\bI)\otimes^h o \ell^1(\bI))$ of norm strictly less than $1$. Therefore
\[
m:=\bar{m} \circ (i'\otimes i') : o\ell^1(\bI) \otimes^h o\ell^1(\bI) \rightarrow \max \ell^1(\bI)
\]
and hence the induced map $\tilde{m}:o\ell^1(\bI) \otimes^h_{o\ell^1(\bI)} o\ell^1(\bI)\rightarrow\max\ell^1(I)$ is a complete quotient, implying that $(\tilde{m})^*:\min\ell^\infty(I)\rightarrow N_{o\ell^1(\bI)}^{\perp}$ is a complete isometry, where
$$N_{o\ell^1(\bI)}=\la f\cdot g\ten h - f\ten g\cdot h\mid f,g,h\in o\ell^1(\bI)\ra\subseteq o\ell^1(\bI)\hten o\ell^1(\bI).$$
Let $F\in N_{o\ell^1(\bI)}^{\perp}$. Then for all $i,j\in I$, 
$$\la F,\delta_i\ten \delta_j\ra=\la F,\delta_i\cdot\delta_i\ten\delta_j\ra=\la F,\delta_i\ten\delta_i\cdot\delta_j\ra=\delta_{ij}\la F,\delta_i\ten\delta_i\ra.$$
It follows that $f:I\ni i\mapsto \la F,\delta_i\ten\delta_i\ra\in\C$ lies in $\ell^\infty(I)$ and $(\tilde{m})^*(f)=F$. Thus, $$(o\ell^1(\bI))^h=\min\ell^\infty(\bI).$$

\end{proof}

\begin{cor}\label{c:l-p}
For $1\leq p \leq 2$, $(\max \ell^p(\bI))^h=\min \ell^\infty(\bI)$.
\end{cor}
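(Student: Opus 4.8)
The plan is to mimic the proof of Proposition~\ref{p:l-1}, realizing the module Haagerup tensor square of $A:=\max\ell^p(\bI)$ as $\max\ell^1(\bI)$ and then dualizing. First I would record the structural input. For $1\le p\le 2$ the space $\max\ell^p(\bI)$ is an operator algebra under pointwise product by \cite[Theorem~3.4]{BL2}, hence a left $h$-module over itself by \cite[Theorem~5.2.1]{BL}, and it carries the cb-multiplier bounded approximate identity $e_F=\sum_{i\in F}\delta_i$ (indexed by finite $F\subseteq \bI$), since $\norm{e_F}_{M_{cb}A}=\norm{1_F}_\infty=1$ while $\norm{e_F\cdot f-f}_p\to 0$. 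Thus Proposition~\ref{p:h-module-mb-ai} applies and yields $\mathrm{Ker}(m_A)=N_{A}$, where $m_A\colon A\hten A\to\max\ell^p(\bI)$ denotes multiplication.

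Since $\norm{fg}_1\le\norm{f}_2\norm{g}_2\le\norm{f}_p\norm{g}_p$ for $1\le p\le 2$ by Cauchy--Schwarz, pointwise multiplication also makes sense as a map $m_p\colon A\hten A\to\max\ell^1(\bI)$, and I would first argue that it is a complete quotient. Writing $j\colon\max\ell^1(\bI)\to\max\ell^p(\bI)$ for the canonical complete contraction induced by the contraction $\ell^1\hookrightarrow\ell^p$, one checks on elementary tensors that $m^{\max}=m_p\circ(j\ten j)$, where $m^{\max}\colon\max\ell^1(\bI)\hten\max\ell^1(\bI)\to\max\ell^1(\bI)$ is the complete quotient map established in Proposition~\ref{p:l-1}. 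The same lifting argument used there (lift through $m^{\max}$, then push forward by the complete contraction $j\ten j$) shows $m_p$ is a complete quotient onto $\max\ell^1(\bI)$.

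The crux is the complete contractivity of $m_p$ into the $\max$ structure. Here I would factor through the intersection Hilbert space: the identity $\id\colon\max\ell^p(\bI)\to \ell^2_r(\bI)\cap\ell^2_c(\bI)$ is completely contractive, because a map out of a $\max$ space has cb-norm equal to its norm and $\norm{\cdot}_2\le\norm{\cdot}_p$. It therefore suffices to prove that pointwise multiplication
$$\bar m\colon (\ell^2_r(\bI)\cap\ell^2_c(\bI))\hten(\ell^2_r(\bI)\cap\ell^2_c(\bI))\to\max\ell^1(\bI)$$
is a complete contraction. Dualizing, $\bar m^*$ sends $\phi\in\min\ell^\infty(\bI)=(\max\ell^1(\bI))^*$ to the diagonal map $D_\phi$ into the sum structure $(\ell^2_r\cap\ell^2_c)^*=\ell^2_r+\ell^2_c$, and this map factors completely contractively as
$$\ell^2_r\cap\ell^2_c\xrightarrow{\ \id\ }\ell^2_r\xrightarrow{\ D_\phi\ }\ell^2_r\xrightarrow{\ \id\ }\ell^2_r+\ell^2_c,$$
where the middle arrow is right multiplication by $\mathrm{diag}(\phi)$, of cb-norm $\norm{\phi}_\infty$, and the outer arrows are the canonical complete contractions of an intersection into, and of a summand into, the corresponding factor/sum. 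Carrying this out at the matrix level yields $\gamma^{2,r}(D_\phi)\le\norm{\phi}_\infty$ completely contractively, so $\bar m$, and hence $m_p$, is a complete contraction. This step is a Hilbertian analogue of \cite[Theorem~3.1]{BL2}, and it is where the operator space geometry of the row/column intersection and sum structures does the real work; I expect it to be the main obstacle.

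Finally, since $m_p$ and $m_A$ agree on elementary tensors and both $\ell^1$- and $\ell^p$-convergence imply coordinatewise convergence, the two multiplications share the same kernel, so $\mathrm{Ker}(m_p)=\mathrm{Ker}(m_A)=N_A$. The complete quotient $m_p$ thus descends to a complete isometric isomorphism $\widetilde{m_p}\colon A\hten_A A\xrightarrow{\ \cong\ }\max\ell^1(\bI)$, and dualizing gives
$$(\max\ell^p(\bI))^h=(A\hten_A A)^*\cong(\max\ell^1(\bI))^*=\min\ell^\infty(\bI)$$
completely isometrically. Equivalently, one may conclude exactly as in Proposition~\ref{p:l-1}, dualizing $\widetilde{m_p}$ to a complete isometry $\min\ell^\infty(\bI)\to N_A^{\perp}$ and invoking the diagonal computation $\la F,\delta_i\ten\delta_j\ra=\delta_{ij}\la F,\delta_i\ten\delta_i\ra$ to see that it is onto.
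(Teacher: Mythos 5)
Your proof is correct, and its endgame coincides with the paper's: exhibit pointwise multiplication $m_p\colon\max\ell^p(\bI)\hten\max\ell^p(\bI)\to\max\ell^1(\bI)$ as a complete quotient map (by lifting through the complete quotient $m^{\max}$ of Proposition~\ref{p:l-1} and pushing forward along the canonical complete contractions), identify the kernel with $N_A$ via the cb-multiplier bounded approximate identity and Proposition~\ref{p:h-module-mb-ai}, and dualize to obtain $(\max\ell^p(\bI))^h\cong(\max\ell^1(\bI))^*=\min\ell^\infty(\bI)$. Where you genuinely depart from the paper is in how the completely contractive map $m_p$ into $\max\ell^1(\bI)$ is produced: the paper composes through $\max\ell^2(\bI)$, writing $m_p=m_2\circ(\iota_2\ten\iota_2)$ and taking the arrow $m_2\colon\max\ell^2(\bI)\hten\max\ell^2(\bI)\to\max\ell^1(\bI)$ for granted from \cite{BL2}, whereas you factor through $\ell^2_r(\bI)\cap\ell^2_c(\bI)$ and prove contractivity from scratch by duality, showing each $\bar m^*(\phi)=D_\phi$ factors through the row space $\ell^2_r(\bI)$ with $\gamma^{2,r}(D_\phi)\le\norm{\phi}_\infty$. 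This buys self-containedness---it re-derives the Blecher--Le Merdy ingredient, and in fact the operator algebra property of $\max\ell^p(\bI)$ itself, since $m_A=j\circ m_p$---at the cost of handling the operator space geometry of intersections and sums by hand; the paper's route is shorter precisely because it outsources this step. One point you should make explicit: the claim that your factorization works ``at the matrix level'' is not automatic, because bounded maps \emph{out of} a $\min$ space need not be completely bounded. It does hold here, since $\mc{CB}(\ell^2_r(\bI),\ell^2_r(\bI))\cong\mc{B}(\ell^2(\bI))$ completely isometrically (\cite[Proposition~3.4.2]{ER}, already used in the paper) and the diagonal embedding of $\ell^\infty(\bI)$ into $\mc{B}(\ell^2(\bI))$ is completely isometric for the $\min$ (i.e.\ $C^*$-) structure, so $[\phi_{ij}]\mapsto[D_{\phi_{ij}}]$ satisfies the required matricial estimates in $M_n(\Gamma^{2,r})$; with that detail supplied, your argument is complete.
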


\begin{proof}
The canonical inclusions of the $\ell^p$-spaces lead to complete contractions 
\[
\xymatrix{
\max\ell^1(\bI)  \ar[r]_{\iota_1} & \max \ell^p(\bI) \ar[r]_{\iota_2} & \max \ell^2(\bI)
}
\]
such that the following diagram commutes.
\[
\xymatrix{
\max \ell^1(\bI)\otimes^h \max \ell^1(\bI) \ar[r]^{\iota_1 \otimes \iota_1}  \ar[rrd]_{m_1}     &  \max\ell^p(\bI) \otimes^h \max \ell^p(\bI) \ar[r]^{\iota_2 \otimes\iota_2} \ar[rd]^{m_p}& \max \ell^2(\bI) \otimes^h \max\ell^2(\bI)   \ar[d]^{m_2}\\
       & &  \max \ell^1(\bI)      }
\]
Since $m_1$ is a complete quotient map (by the proof of Proposition \ref{p:l-1}), it follows that
\[
m_p: \max\ell^p(\bI) \otimes^h \max \ell^p(\bI) \rightarrow \max\ell^p(\bI)
\]
is also. Proposition \ref{p:h-module-mb-ai} then entails $(\max\ell^p(\bI))^h=\min\ell^\infty(\bI)$ using the fact that $\max \ell^p(\bI)$ has a cb-multiplier bounded approximate identity.
\end{proof}

In Proposition \ref{p:l-1} we saw that $(o\ell^1(\bI))^h=\min\ell^\infty(\bI)$, regardless of the operator space structure imposed on $\ell^1(I)$. In stark contrast, we now show that different operator space structures on $\ell^2(I)$ yield very different Haagerup duals.

\begin{prop}\label{p:OH} Let $OH(\bI)$ denote the operator Hilbert space structure on $\ell^2(\bI)$ for an index set $\bI$. Then $(OH(\bI))^h=OH(\bI)^*$. 
\end{prop}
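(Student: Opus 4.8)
The plan is to identify the module Haagerup tensor product $OH(\bI)\otimes^h_{OH(\bI)}OH(\bI)$ explicitly and then feed the result into Proposition~\ref{p:h-module-mb-ai}. Write $A=OH(\bI)$ with pointwise multiplication. The starting point is Pisier's computation of the Haagerup tensor product of operator Hilbert spaces. Combining the standard identities $H_c\otimes^h K_c=(H\otimes_2 K)_c$ and $H_r\otimes^h K_r=(H\otimes_2 K)_r$ with the facts that $\otimes^h$ commutes with complex interpolation and that $OH(H)=(H_c,H_r)_{1/2}$, one obtains a complete isometry
\[
OH(\bI)\otimes^h OH(\bI)\cong OH\big(\ell^2(\bI)\otimes_2\ell^2(\bI)\big)=OH\big(\ell^2(\bI\times\bI)\big),
\]
under which $\delta_i\otimes\delta_j\mapsto\delta_{(i,j)}$.

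Under this identification I would observe that pointwise multiplication $m$ corresponds to $OH(P)$, where $P:\ell^2(\bI\times\bI)\to\ell^2(\bI)$ is the diagonal extraction $\delta_{(i,j)}\mapsto\delta_{ij}\delta_i$; indeed $m(\delta_i\otimes\delta_j)=\delta_{ij}\delta_i=P(\delta_{(i,j)})$. Now $P$ is the Hilbert-space adjoint of the diagonal isometry $\Delta:\ell^2(\bI)\to\ell^2(\bI\times\bI)$, $\delta_i\mapsto\delta_i\otimes\delta_i$, so $P$ is a coisometry, i.e.\ a metric surjection. Since the $OH$ functor carries isometries to complete isometries and metric surjections to complete metric surjections (because $OH$ commutes with the formation of Hilbert subspaces and quotients), it follows that $m=OH(P):OH(\bI)\otimes^h OH(\bI)\to OH(\bI)$ is a complete quotient map onto all of $OH(\bI)$. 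In particular $m$ is a complete contraction, so $A$ is a left $h$-module over itself (equivalently, completely isomorphic to an operator algebra).

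To invoke Proposition~\ref{p:h-module-mb-ai} it remains to exhibit a cb-multiplier bounded approximate identity. For a finite set $F\subseteq\bI$, the indicator $1_F$ acts on $A$ as the coordinate projection $P_F=OH(\pi_F)$, where $\pi_F$ is the orthogonal projection of $\ell^2(\bI)$ onto $\ell^2(F)$; by $OH$-functoriality $\norm{1_F}_{M_{cb}A}=\norm{\pi_F}=1$, while $\norm{1_F\cdot f-f}_{OH}=\norm{\pi_{\bI\setminus F}f}_2\to 0$. Thus $(1_F)_F$ is a cb-multiplier bounded approximate identity. As $m$ is a complete quotient map onto its image $\mathrm{Im}(m)=OH(\bI)$, Proposition~\ref{p:h-module-mb-ai} gives $OH(\bI)\otimes^h_{OH(\bI)}OH(\bI)\cong OH(\bI)$ completely isometrically, whence
\[
(OH(\bI))^h=\mathrm{Im}(m)^*=OH(\bI)^*
\]
completely isometrically and weak*-weak* homeomorphically.

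The main obstacle is the first step: justifying the complete isometry $OH\otimes^h OH=OH(\ell^2\otimes_2\ell^2)$ and the behaviour of $OH$ on coisometries. Both rest on Pisier's analysis of $OH$ through complex interpolation — specifically that $\otimes^h$ commutes with interpolation and that $OH$ respects Hilbert subspaces and quotients. Once these are in place, the identification of $m$ with $OH(P)$ and the verification of the approximate identity are routine. One should be careful \emph{not} to interpolate the relevant quotient maps directly; instead the quotient property of $m$ is extracted from the single coisometry $P$ via the (sub)quotient-compatibility of the $OH$ functor, which sidesteps the familiar difficulty that complex interpolation need not preserve metric surjectivity.
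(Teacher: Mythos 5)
Your proof is correct, and its skeleton matches the paper's: identify $OH(\bI)\otimes^h OH(\bI)$ with $OH(\bI\times\bI)$ via Pisier, prove that the multiplication $m$ is a complete quotient map, verify the approximate-identity hypothesis, and conclude via Proposition~\ref{p:h-module-mb-ai}. The difference lies in how the two key steps are executed. For the quotient property, the paper dualizes: it computes $(m^*)^{(\infty)}$ on elements $\xi=\sum_i x_i\otimes\bar{\delta}_i$ using Pisier's self-duality formula $\norm{\xi}_{OH(\bI)^*}=\norm{\sum_i x_i\otimes\bar{x}_i}_{\min}^{1/2}$, and observes that since the vectors $\bar{\delta}_i\otimes\bar{\delta}_i$ are again orthonormal the same formula returns the same value, so $m^*$ is a complete isometry and hence $m$ is a complete quotient. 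You instead run a soft functoriality argument: $m=OH(P)$ for the coisometry $P=\Delta^*$, and $OH(\Delta)$ is a completely isometric right inverse of $OH(P)$, which immediately makes $OH(P)$ a complete quotient map. The two arguments rest on the same underlying facts (homogeneity and subspace/quotient stability of $OH$); your version quotes them as a black box where the paper re-derives the needed instance by an explicit matrix-norm computation, and yours has the mild quantitative bonus that $m$ comes out a complete \emph{contraction}, whereas the paper only cites \cite{BL2} for complete boundedness. For the approximate identity, the paper never verifies a cb-multiplier bound: it checks directly, at the first matrix level where the Haagerup tensor product is just $\ell^2(\bI\times\bI)$, that the truncations $(\chi_F\otimes 1)y\to y$, which is precisely the convergence used inside the proof of Proposition~\ref{p:h-module-mb-ai}, and so it can only cite ``the proof of'' that proposition; you instead note $\norm{\chi_F}_{M_{cb}}=\norm{\pi_F}=1$ by homogeneity and apply the proposition off the shelf. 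Both routes are valid; yours is the cleaner application of the paper's general machinery, the paper's is more self-contained at the level of explicit norms.
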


\begin{proof} It is known that $\overline{OH(\bI)}=OH(\bI)^*$, $OH(\bI) \otimes^h OH(\bI) = OH(\bI \times \bI)$ (see \cite[Chapter 7]{P}), and that $OH(\bI)$ is an operator algebra under pointwise multiplication, that is, the multiplication map 
\[
m : OH(\bI) \otimes^h OH(\bI)= OH( \bI \times \bI) \rightarrow OH(\bI) 
\]
is completely bounded \cite[Theorem~2.1]{BL2}.  Here we prove that in fact $m$ is a complete quotient. To do so, note that 
\[
m^*: OH(\bI)^* \rightarrow OH(\bI \times \bI)^*
\]
where $m^*(\bar{\delta}_i)=\bar{\delta}_i \otimes \bar{\delta}_i$ for every $i \in \bI$. Let $\xi = \sum_{i \in \bI(\xi)} x_i \otimes \bar{\delta}_i$ be an element in the algebraic tensor product $\mc{K}(\ell^2) \otimes  \overline{OH(\bI)}$. By \cite[Chapter 7]{P} we have
\[
\norm{\xi}_{OH(\bI)^*}= \left\| \sum_{i\in \bI}  x_i \otimes \bar{x}_i\right\|_{\min}^{\frac{1}{2}}.
\]
Therefore,

$$\norm{(m^*)^{(\infty)}(\xi)}_{M_{\infty}(OH(\bI \times \bI)^*)}
= \left\| \sum_{i \in \bI}  x_i \otimes \bar{\delta}_i \otimes \bar{\delta}_i\right\|_{M_{\infty}(OH(\bI \times \bI)^*)}
=\left\| \sum_{i \in \bI}  x_i \otimes \bar{x}_i \right\|_{\min}^{\frac{1}{2}} 
=\norm{\xi}_{M_{\infty}(OH(\bI)^*)},$$
and $m^*$ is a complete isometry. Consequently, $m: OH(\bI)\otimes^h OH(\bI) \rightarrow OH(\bI)$ is a complete quotient map.

Now, on the first matrix level $OH(\bI) \otimes^h OH(\bI)=\ell^2(I\times I)$. Thus, if $(\chi_{F})$ is the net of characteristic functions of finite subsets $F\subseteq I$, then for any $y\in OH(\bI) \otimes^h OH(\bI)$, 
$$\norm{(\chi_F\ten 1)y-y}_{OH(\bI) \otimes^h OH(\bI)}\leq\norm{(\chi_F\ten \chi_F)y-y}_{OH(\bI) \otimes^h OH(\bI)}\rightarrow0.$$
It follows from the proof of Proposition \ref{p:h-module-mb-ai} that $(OH(\bI))^h=OH(\bI)^*$. 
\end{proof}

\begin{prop}\label{p:C-R}
$(\ell^2_r(\bI))^h=(\ell^2_r(\bI))^*$ and $(\ell^2_c(\bI))^h= (\ell^2_c(I))^*$.
\end{prop}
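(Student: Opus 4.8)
The plan is to follow the proof of Proposition~\ref{p:OH} almost verbatim, replacing the operator Hilbert space structure by the row (resp.\ column) structure. I describe the row case in detail; the column case is symmetric, with the roles of rows and columns interchanged.

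First I would record the relevant Haagerup tensor identity. For row Hilbert spaces one has $\ell^2_r(\bI)\hten\ell^2_r(\bI)=\ell^2_r(\bI\times\bI)$ completely isometrically (see \cite[Chapter 7]{P}), under which $\delta_i\ten\delta_j$ corresponds to $\delta_{(i,j)}$. Since pointwise multiplication satisfies $\delta_i\cdot\delta_j=\delta_{ij}\delta_i$, the multiplication map becomes the diagonal restriction
\[
m:\ell^2_r(\bI\times\bI)\to\ell^2_r(\bI),\qquad m(\delta_{(i,j)})=\delta_{ij}\delta_i,
\]
that is, $m=D_r$ is the row-space amplification of the Hilbert-space coisometry $D:\ell^2(\bI\times\bI)\to\ell^2(\bI)$, $Df=\sum_i f(i,i)\delta_i$. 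Because $\norm{D}=1$ and $\mc{CB}(H_r,K_r)=\BH(H,K)$ isometrically, $m$ is completely contractive, so $\ell^2_r(\bI)$ is a completely contractive Banach algebra (indeed an operator algebra) and a left $h$-module over itself.

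Next I would show that $m$ is a complete quotient map by verifying that $m^*$ is a complete isometry, exactly as in Proposition~\ref{p:OH}. Using $(\ell^2_r(\bI))^*=\overline{\ell^2(\bI)}_c$, the map $m^*$ sends $\overline{\delta_i}\mapsto\overline{\delta_{(i,i)}}$. For $\xi=\sum_i x_i\ten\overline{\delta_i}\in M_n\big(\overline{\ell^2(\bI)}_c\big)$ the column-space norm formula gives $\norm{\xi}=\norm{\sum_i x_i^*x_i}^{1/2}$, and since $(\overline{\delta_{(i,i)}})_i$ is orthonormal in $\overline{\ell^2(\bI\times\bI)}$,
\[
\norm{(m^*)^{(n)}(\xi)}=\Big\|\sum_i x_i\ten\overline{\delta_{(i,i)}}\Big\|_{M_n(\overline{\ell^2(\bI\times\bI)}_c)}=\Big\|\sum_i x_i^*x_i\Big\|^{1/2}=\norm{\xi}.
\]
Hence $m^*$ is a complete isometry and $m$ is a complete quotient map. (Equivalently, $m^*$ is the conjugate column-space amplification of the isometric diagonal embedding $D^*$, and isometries of Hilbert spaces induce complete isometries of the column structure.)

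Finally I would run the approximate-identity argument from the proof of Proposition~\ref{p:h-module-mb-ai}, with the net $(\chi_F)$ of characteristic functions of finite $F\subseteq\bI$ playing the role of the bounded approximate identity, just as for $OH(\bI)$. On the first matrix level the underlying Banach space of $\ell^2_r(\bI)\hten\ell^2_r(\bI)$ is $\ell^2(\bI\times\bI)$, so for any $y$ there,
\[
\norm{(\chi_F\ten1)y-y}\leq\norm{(\chi_F\ten\chi_F)y-y}\rightarrow0,
\]
which shows $(l_{\chi_F}\ten\id)y\rightarrow y$ in $\ell^2_r(\bI)\hten\ell^2_r(\bI)$. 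The identity $(l_{\chi_F}\ten\id)y+N_h=\chi_F\ten m(y)+N_h$ from that proof then forces $\mathrm{Ker}(m)=N_h$, whence $\widetilde m:\ell^2_r(\bI)\hten_{\ell^2_r(\bI)}\ell^2_r(\bI)\cong\ell^2_r(\bI)$ completely isometrically, and dualizing yields $(\ell^2_r(\bI))^h=(\ell^2_r(\bI))^*$ completely isometrically and weak*-weak* homeomorphically. The column case is identical after replacing $\overline{(\cdot)}_c$ by $\overline{(\cdot)}_r$ and the formula $\norm{\sum_i x_i^*x_i}^{1/2}$ by $\norm{\sum_i x_i x_i^*}^{1/2}$. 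I expect the main obstacle to be the middle step: correctly identifying the (conjugate) dual spaces and confirming via the row/column norm formulas that $m^*$ is completely isometric, i.e.\ that the diagonal collapse is a complete quotient; the approximate-identity step is then a direct transcription of the $OH$ argument, since at the first matrix level the Haagerup tensor norm reduces to the Hilbert-space norm.
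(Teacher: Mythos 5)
Your argument is correct, but it proves the crucial step --- that multiplication is a complete quotient map --- by a different mechanism than the paper does. You transplant the proof of Proposition~\ref{p:OH}: your inputs are the identifications $\ell^2_r(\bI)\hten\ell^2_r(\bI)=\ell^2_r(\bI\times\bI)$ and $(\ell^2_r(\bI))^*=\overline{\ell^2(\bI)}_c$, the fact that Hilbert space isometries induce complete isometries of row/column structures, and the duality between complete quotient maps and completely isometric adjoints. The paper's proof of this proposition uses none of these: given $\xi=[\xi_{ij}]\in M_n(\ell^2_r(\bI))$ of norm less than $1$, it exhibits an explicit preimage $\eta=u\odot y\in M_n(\ell^2_r(\bI)\hten\ell^2_r(\bI))$, where $u\in M_n(M_{1,\infty}(\ell^2_r(\bI)))$ is assembled from the entries of $\xi$ and $y=I_{n\times n}\ten[\delta_s]_{s\in\bI}\in M_n(M_{\infty,1}(\ell^2_r(\bI)))$, and then computes the Haagerup norm of each factor by hand ($\norm{u}=\norm{\xi}$ and $\norm{y}=1$) directly from the row matrix-norm formulas. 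Note that the paper's $\eta$ is precisely your diagonal embedding $D^*$ applied entrywise to $\xi$, so the underlying idea --- the diagonal map is a completely isometric right inverse of multiplication --- is shared; only the verification differs. Your route is shorter and more conceptual, and makes transparent that the row and column cases are formally identical to the $OH$ case; the paper's route is self-contained (it needs only the definition of the Haagerup norm, not the tensor identities $\ell^2_r\hten\ell^2_r=\ell^2_r(\bI\times\bI)$ and $\ell^2_c\hten\ell^2_c=\ell^2_c(\bI\times\bI)$), and its explicit $u\odot y$ factorization is exactly the device reused later to construct the right inverse $\psi$ in the proof of Proposition~\ref{p:H-A-C*}. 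The two proofs finish the same way: you rerun the $(\chi_F)$ approximate-identity argument of Proposition~\ref{p:OH}, which is valid since the first matrix level of $\ell^2_r(\bI)\hten\ell^2_r(\bI)$ carries the Hilbert space norm of $\ell^2(\bI\times\bI)$, while the paper invokes Proposition~\ref{p:h-module-mb-ai} directly, citing that $(\chi_F)$ is a cb-multiplier bounded approximate identity for $\ell^2_r(\bI)$.
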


\begin{proof}
First let us treat the case of $\ell^2_r(\bI)$. It is shown in the proof of \cite[Theorem~2.1]{BL2} that the multiplication mapping 
\[
m_r: \ell^2_r(\bI) \otimes^h \ell^2_r(\bI) \rightarrow \ell^2_r(\bI)
\]
is a complete contraction. We prove that $m_r$ is in fact a complete quotient. Let $n\in\N$ and $\xi = [\xi_{ij}]\in M_{n}(\ell^2_r(\bI))$ so that $\norm{\xi}_{M_{n}(\ell^2_r(\bI))}< 1$.
For each pair $i,j$, $\xi_{ij}=(\alpha_s^{(i,j)})_{s \in \bI}$ where $\alpha_s^{(i,j)} \in \Bbb{C}$. 
 Note that $\xi$ is nothing but $(m_r)^{(n)}(\eta)$ for
 \begin{equation}\label{eq-input-r}
\eta= \left(  \sum_{i,j=1}^{n} e_{ij} \otimes \left[ \ldots \alpha_s^{(i,j)}\delta_s \ldots\right]_{s \in \bI} \right) \odot \left( I_{n\times n} \otimes \left[
\begin{array}{c}
\vdots\\
\delta_s\\
\vdots
\end{array} 
\right]_{s\in \bI}\right)
\end{equation}
where $e_{ij}$ are the canonical matrix units of $M_{n}$ and $I_{n\times n}$ is the $n \times n$ identity matrix. 
All we need to show is that the norm of \eqref{eq-input-r} is less than $1$. To do so, note that 
\begin{eqnarray*}
\left\| \sum_{i,j=1}^{n} e_{ij} \otimes \left[ \ldots \alpha_s^{(i,j)}\delta_s \ldots\right]_{s \in \bI} \right\|_{M_{n}(M_{1,\infty}(\ell^2_r(\bI)))} 
&=&  \left\| \sum_{i,j=1}^n \sum_{k=1}^n  e_{ik} e_{jk} \otimes \langle \xi_{ik}, \xi_{jk}\rangle   \right\|^{\frac{1}{2}}_{M_{n}}\\
&=&  \left\|  \sum_{i,j=1}^n \sum_{k=1}^n  e_{ij} \otimes \langle \xi_{ik}, \xi_{jk}\rangle   \right\|^{\frac{1}{2}}_{M_{n}}\\
&=&  \left\| \left[ \sum_{k=1}^n  \langle \xi_{ik}, \xi_{jk}\rangle       \right]_{i,j=1}^n \right\|^{\frac{1}{2}}_{M_{n}} = \norm{\xi}_{M_{n}(\ell^2_r(\bI))}.
\end{eqnarray*}
Also,
$$
\left\| I_{n\times n} \otimes \left[
\begin{array}{c}
\vdots\\
\delta_s\\
\vdots
\end{array} 
\right]_{s\in \bI} \right\|_{M_{n}(M_{\infty, 1}(\ell^2_r(\bI)))}  
= \left\|  
\left[
\begin{array}{c}
\vdots\\
\delta_s\\
\vdots
\end{array} 
\right]_{s\in \bI}
 \right\|_{M_{\infty, 1}(\ell^2_r(\bI))}  \\
 = \left\| \;  
  \big[
\langle \delta_s, \delta_t \rangle
\big]_{s, t \in \bI}
 \right\|^{\frac{1}{2}}_{M_\infty} = \left\| I_{\infty} \right\|_{M_\infty}=1
$$
where $I_\infty$ is the identity matrix.  Therefore, $\norm{\eta}_{M_{n}(\ell^2_r(\bI) \otimes^h \ell^2_r(\bI))}< 1$. This implies that $m_r$ is a complete quotient map. Since $\ell^2_r(\bI)$ has a cb-multiplier bounded approximate identity (see \cite[Proposition 3.4.2]{ER}, Proposition~\ref{p:h-module-mb-ai} implies that $(\ell^2_r(\bI))^h=(\ell^2_r(\bI))^*$.

The case $m_c:  \ell^2_c(\bI) \otimes^h \ell^2_c(\bI) \rightarrow \ell^2_c(\bI)$ is similar where for each $\xi=[\xi_{ij}]\in M_{n}(\ell^2_c(\bI))$ with $\norm{\xi}_{M_{n}(\ell^2_c(\bI))} < 1$, we have $\xi= m_c^{(n)}(\eta)$ where
\[
\eta= \left(  I_{n\times n}  \otimes \left[ \ldots \delta_s \ldots\right]_{s \in \bI} \right) \odot \left( \sum_{i,j=1}^{n}e_{ij}  \otimes \left[
\begin{array}{c}
\vdots\\
\alpha_s^{(i,j)} \delta_s\\
\vdots
\end{array} 
\right]_{s\in \bI}\right)
\]
so that for each pair $i,j$, $\xi_{ij}=(\alpha_s^{(i,j)})_{s \in \bI}$ as an element in $\ell^2(\bI)$.  Then one can show that $\norm{\eta}_{M_{n}(\ell^2_c(\bI) \otimes^h \ell^2_c(\bI))}<1$. 
\end{proof}

\section{Examples from Abstract Harmonic Analysis}\label{s:5}

A \textit{locally compact quantum group} is a quadruple $\G=(\LIQ,\Gam,\vphi,\psi)$, where $\LIQ$ is a Hopf-von Neumann algebra with a co-associative co-multiplication $\Gam:\LIQ\rightarrow\LIQ\oten\LIQ$, and $\vphi$ and $\psi$ are fixed (normal faithful semifinite) left and right Haar weights on $\LIQ$, respectively \cite{KV1,KV2}. For every locally compact quantum group $\G$ there exists a \e{left fundamental unitary operator} $W$ on $L^2(\G,\vphi)\ten L^2(\G,\vphi)$ and a \e{right fundamental unitary operator} $V$ on $L^2(\G,\psi)\ten L^2(\G,\psi)$ implementing the co-multiplication $\Gam$ via
\begin{equation*}\Gam(x)=W^*(1\ten x)W=V(x\ten 1)V^*, \ \ \ x\in\LIQ.\end{equation*}
Both unitaries satisfy the \e{pentagonal relation}; that is,
\begin{equation}\label{penta}W_{12}W_{13}W_{23}=W_{23}W_{12}\hs\hs\mathrm{and}\hs\hs V_{12}V_{13}V_{23}=V_{23}V_{12}.\end{equation}
At the level of the Hilbert spaces,
$$W^*\Lphis(x\ten y)=\Lphis(\Gam(y)(x\ten 1))\hs\mathrm{and}\hs V\Lambda_{\psi\ten\psi}(a\ten b)=\Lambda_{\psi\ten\psi}(\Gam(a)(1\ten b))$$
for $x,y\in\mc{N}_\vphi$ and $a,b\in\mc{N}_\psi$. By \cite[Proposition 2.11]{KV2}, we may identify $L^2(\G,\vphi)$ and $L^2(\G,\psi)$, so we will simply use $\LTQ$ for this Hilbert space throughout the paper. The \textit{reduced quantum group $C^*$-algebra} of $\LIQ$ is defined as
$$C_0(\G):=\overline{\{(\id\ten\om)(W)\mid \om\in\TCQ\}}^{\norm{\cdot}}.$$
We say that $\G$ is \textit{compact} if $C_0(\G)$ is a unital $C^*$-algebra, in which case we denote $C_0(\G)$ by $C(\G)$. For compact quantum groups it follows that $\vphi$ is finite and right invariant. In particular, $\vphi=\psi$, and we may normalize $\vphi$ to a state on $\LIQ$. If, in addition, $\vphi$ is tracial, we say that $\G$ is a \textit{compact Kac algebra}. In what follows we assume that $\G$ is a such a quantum group. 

We let $R$ denote the \textit{antipode} $\G$. We will make use of the following relations
\begin{equation*}(\id\ten\vphi)(\Gam(x)(1\ten y)))=R((\id\ten\vphi)((1\ten x)\Gam(y))), \ \ \ R((\vphi\ten\id)(x\ten 1)\Gam(y))=(\vphi\ten\id)(\Gam(x)(y\ten 1)),\end{equation*}
which are valid for all $x,y\in\LIQ$.
 
Let $\LOQ$ denote the predual of $\LIQ$. Then the pre-adjoint of $\Gam$ induces an associative completely contractive multiplication on $\LOQ$, defined by
\begin{equation*}\star:\LOQ\pten\LOQ\ni f\ten g\mapsto f\star g=\Gam_*(f\ten g)\in\LOQ.\end{equation*}
There is a canonical $\LOQ$-bimodule structure on $\LIQ$, given by
\begin{equation*}\la f\star x,g\ra=\la x,g\star f\ra\hs\mathrm{and}\hs\la x\star f,g\ra=\la x,f\star g\ra, \ \ \ x\in\LIQ, \ f,g\in\LOQ.\end{equation*}
There is an involution on $\LOQ$ given by $f^o=f^*\circ R$, such that $\LOQ$ becomes a Banach *-algebra under its canonical norm. 

The restricted co-multiplication maps $C(\G)$ into $C(\G)\ten_{\min} C(\G)$, and induces a completely contractive Banach algebra structure on the operator space dual $M(\G):=C(\G)^*$. Restriction then identifies $\LOQ$ with a norm-closed two-sided ideal in $M(\G)$. 

A \textit{unitary co-representation} of $\G$ is a
unitary $U\in\LIQ\oten\BH$ satisfying $(\Gam\ten\id)(U)=U_{13}U_{23}$. Every unitary co-representation gives rise to a $*$-representation of $\LOQ$ via
$$\LOQ\ni f \mapsto (f\ten\id)(U)\in\BH.$$
In particular, the left fundamental unitary $W$ gives rise to the \textit{left regular representation} $\lm:\LOQ\rightarrow\BLTQ$ defined by $\lm(f)=(f\ten\id)(W)$, $f\in\LOQ$,
which is an injective, completely contractive $*$-homomorphism from $\LOQ$ into $\BLTQ$. Then $\ell^{\infty}(\h{\G}):=\{\lm(f) : f\in\LOQ\}''$ is the von Neumann algebra associated with the dual (discrete) quantum group $\h{\G}$ of $\G$. Every irreducible co-representation $u^{\alpha}$ of $\G$ is finite-dimensional and is unitarily equivalent to a sub-representation of $W$, and every unitary co-representation of $\G$ can be decomposed into a direct
sum of irreducible co-representations. We let $\Irr:=\{u^{\alpha}\}$ denote a complete set of representatives
of irreducible co-representations of $\G$ which are pairwise inequivalent. Slicing by vector functionals $\om_{ij}=\om_{e_j,e_i}$ relative to an orthonormal basis
of $H_\alpha$, we obtain elements $u^{\alpha}_{ij}=(\id\ten\om_{ij})(u^{\alpha})\in\LIQ$ satisfying
\begin{equation*}\Gam(u^{\alpha}_{ij})=\sum_{k=1}^{n_\alpha}u^{\alpha}_{ik}\ten u^{\alpha}_{kj}, \ \ \ R(u_{ij}^\al)=u_{ji}^{\al^*} \ \ \ 1\leq i,j\leq n_{\alpha}\end{equation*}
The linear space $\A:=\textnormal{span}\{u^{\alpha}_{ij}\mid\al\in\Irr \ 1\leq i,j\leq n_\al\}$ forms unital Hopf *-algebra which is dense in $C(\G)$. For every $\al$ there exists a conjugate representation $\overline{\al}\in\Irr$ on $\overline{H}_\al$, such that
$u_{ij}^{\overline{\al}}=u_{ij}^{\al^*}$ (see \cite[Proposition 1.4.6]{NT2}).

The Peter--Weyl orthogonality relations for compact Kac algebras are as follows:
$$\vphi((u^\be_{kl})^*u_{ij}^\al)=\frac{\delta_{\al\be}\delta_{ik}\delta_{jl}}{n_\al}=\vphi(u^\be_{kl}(u_{ij}^\al)^*).$$
From this it follows that $\{\sqrt{n_\al}\Lphi(u_{ij}^\al)\mid\al\in\Irr, 1\leq i,j\leq n_\al\}$ is an orthonormal basis for $\LTQ$.

For an element $x\in\LIQ$, we let $x\cdot\vphi$ denote the element in $\LOQ$ given by $\la x\cdot\vphi,y\ra=\vphi(yx)=\vphi(xy)$, $y\in\LIQ$. If $x=u_{ij}^\al$, we denote $u_{ij}^\al\cdot\vphi$ by $\vphi_{ij}^\al$. By the density of $\A$ in $C(\G)$, it follows that $\LIQ\cdot\vphi$ is dense in $\LOQ$. 



As in the case of compact groups, the irreducible characters of $\G$ play an important role in the harmonic analysis. For $\al\in\Irr$, we let
$$\chi^\al:=(\id\ten\tr)(u^{\al})=\sum_{i=1}^{n_\al}u^\al_{ii}\in\LIQ$$
be the \emph{character} of $\al$. The characters (as well as the quantum characters) satisfy the decomposition relations:
\begin{equation}\label{e:N}\chi^\al\chi^\be=\sum_{\gamma\in\Irr}N^{\gamma}_{\al\be}\chi^\gamma,\end{equation}
where $N^{\gamma}_{\al\be}$ is the multiplicity of $\gamma$ in the tensor product representation $\al\ten\be$ (see \cite[Proposition 1.4.3]{NT2}). It follows that $\chi^{\overline{\al}}=\chi^{\al^*}$, $\al\in\Irr$. Letting $\vphi^\al:=\chi^\al\cdot\vphi$ be the $\LOQ$ elements corresponding to the quantum characters of $\G$, it follows from the orthogonality relations that
$$\la\vphi^\al\star f,u^{\be*}_{kl}\ra=\la f\star\vphi^\al,u^{\be*}_{kl}\ra=\la f,u^{\be*}_{kl}\ra\frac{\delta_{\al\be}}{n_\al}$$
for all $f\in\LOQ$ and $\be\in\Irr$. In particular,
\begin{equation}\label{idem}\vphi^\al\star\vphi^\al=\frac{1}{n_\al}\vphi^\al, \ \ \ \al\in\Irr.\end{equation}
As in the group setting, the irreducible characters correspond to central idempotents in the dual discrete von Neumann algebra, as shown (for instance) in \cite[Lemma 3.3]{AC}.

\begin{lem}\label{l:matrixunits} Let $\G$ be a compact Kac algebra. Then
$$\{n_\al\lm(\vphi_{ij}^\al)\mid\al\in\Irr, 1\leq i,j\leq n_\al\}$$
forms a set of matrix units for the von Neumann algebra $\ell^\infty(\h{\G})$. In particular, $z_\al:=n_\al\lm(\vphi^\al)$ are the canonical central projections implementing the isomorphism $\ell^\infty(\h{\G})=\prod_{\al\in\Irr} M_{n_\al}$.
\end{lem}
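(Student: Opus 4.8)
The plan is to transport everything across the left regular representation $\lm$, which by assumption is an injective, completely contractive $*$-homomorphism, so that $\lm(f\star g)=\lm(f)\lm(g)$ and $\lm(f)^*=\lm(f^o)$. Setting $e_{ij}^\al:=n_\al\lm(\vphi_{ij}^\al)$, the matrix unit relations
\[
e_{ij}^\al e_{kl}^\be=\delta_{\al\be}\delta_{jk}\,e_{il}^\al,\qquad (e_{ij}^\al)^*=e_{ji}^\al
\]
reduce to the two algebraic identities $\vphi_{ij}^\al\star\vphi_{kl}^\be=\tfrac{\delta_{\al\be}\delta_{jk}}{n_\al}\vphi_{il}^\al$ and $(\vphi_{ij}^\al)^o=\vphi_{ji}^\al$ in $\LOQ$. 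Everything then rests on proving these two identities.

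For the convolution formula I would argue as follows. Since $\A\cdot\vphi$ is norm-dense in $\LOQ$ and $\A=\operatorname{span}\{u_{pq}^\gamma\}$ is weak*-dense in $\LIQ$, it suffices to evaluate both sides on the elements $u_{pq}^{\overline{\gamma}}=(u_{pq}^\gamma)^*$. Using $\la f\star g,x\ra=\la f\ten g,\Gam(x)\ra$ together with $\Gam(u_{pq}^{\overline{\gamma}})=\sum_m u_{pm}^{\overline{\gamma}}\ten u_{mq}^{\overline{\gamma}}$ and $\vphi_{ij}^\al(u_{pm}^{\overline{\gamma}})=\vphi\big((u_{pm}^\gamma)^*u_{ij}^\al\big)$, one obtains
\[
\la\vphi_{ij}^\al\star\vphi_{kl}^\be, u_{pq}^{\overline{\gamma}}\ra=\sum_m\vphi\big((u_{pm}^\gamma)^*u_{ij}^\al\big)\,\vphi\big((u_{mq}^\gamma)^*u_{kl}^\be\big).
\]
Evaluating each factor by the Peter--Weyl orthogonality relations collapses the sum to $\delta_{\al\be}\delta_{jk}\delta_{\al\gamma}\delta_{ip}\delta_{lq}/n_\al^2$, which is exactly $\tfrac{\delta_{\al\be}\delta_{jk}}{n_\al}\la\vphi_{il}^\al,u_{pq}^{\overline{\gamma}}\ra$, giving the claim; the special case $i=j$, $k=l$ summed over the diagonal recovers the idempotent relation \eqref{idem}.

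For the involution I would use the Kac hypothesis in an essential way. Traciality of $\vphi$ gives $\overline{\vphi(z^*u_{ij}^\al)}=\vphi(u_{ij}^{\al*}z)$, so $(\vphi_{ij}^\al)^*=\vphi(u_{ij}^{\al*}\,\cdot\,)$; then combining $\vphi\circ R=\vphi$ with the anti-multiplicativity of $R$, $R^2=\id$, and $R(u_{ij}^{\al*})=u_{ji}^\al$ (from $R(u_{ij}^\al)=u_{ji}^{\al*}$) lets me move $R$ off the argument and conclude $(\vphi_{ij}^\al)^o=\vphi(\,\cdot\,u_{ji}^\al)=\vphi_{ji}^\al$. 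Each $e_{ij}^\al$ is nonzero because $\lm$ is injective and $\vphi_{ij}^\al(u_{ij}^{\overline{\al}})=\vphi\big((u_{ij}^\al)^*u_{ij}^\al\big)=1/n_\al\neq0$. Hence for fixed $\al$ the family $\{e_{ij}^\al\}_{i,j=1}^{n_\al}$ forms genuine $n_\al\times n_\al$ matrix units, distinct blocks annihilate each other, and since $\operatorname{span}\{e_{ij}^\al\}=\lm(\A\cdot\vphi)$ is weak*-dense in $\lm(\LOQ)''=\ell^\infty(\h{\G})$ these matrix units exhaust the algebra, yielding $\ell^\infty(\h{\G})=\prod_{\al\in\Irr}M_{n_\al}$. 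The diagonal sums $z_\al=n_\al\lm(\vphi^\al)=\sum_i e_{ii}^\al$ are the unit of the $\al$-th block, so they are the central projections, with centrality and $z_\al^2=z_\al$ immediate from the matrix unit relations (equivalently from \eqref{idem}).

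I expect the main obstacle to be the bookkeeping in the convolution computation: one must pass correctly to the conjugate co-representations $\overline{\gamma}$ so that the orthogonality relations apply in their stated form, and then check that the resulting Kronecker deltas combine to force $\al=\be=\gamma$ and $j=k$ simultaneously. The involution identity and the density/completeness arguments are comparatively routine once the Kac properties of $R$ and $\vphi$ are invoked.
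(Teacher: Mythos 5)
Your proof is correct, but note that the paper itself does not prove this lemma at all: it defers to the citation \cite[Lemma 3.3]{AC}. So your self-contained argument cannot be compared line-by-line with an in-paper proof; judged on its own merits, it is sound. The two algebraic identities you reduce to are exactly right, and both computations check out: the Peter--Weyl evaluation on $u_{pq}^{\overline{\gamma}}$ gives $\delta_{\al\be}\delta_{\al\gamma}\delta_{jk}\delta_{ip}\delta_{lq}/n_\al^2$ on both sides (using that $\gamma\mapsto\overline{\gamma}$ is a bijection of $\Irr$, so these elements span $\A$, which is weak*-dense in $\LIQ$ and hence separates normal functionals), and the antipode manipulation $\vphi(u_{ij}^{\al*}R(x))=\vphi\bigl(x\,R(u_{ij}^{\al*})\bigr)=\vphi(x\,u_{ji}^\al)$ is valid. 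Transporting these through the $*$-homomorphism $\lm$ then gives the matrix-unit relations, consistent with the paper's own formula (\ref{e:W}).

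Three small points you should tighten. First, the identity $\overline{\vphi(z^*u_{ij}^\al)}=\vphi(u_{ij}^{\al*}z)$ is not traciality but simply self-adjointness of the state ($\overline{\vphi(w)}=\vphi(w^*)$); traciality is what makes $x\cdot\vphi$ unambiguous as a two-sided object, and it enters your $R$-computation only through the Kac-specific facts $\vphi\circ R=\vphi$, $R^2=\id$, and $R(x^*)=R(x)^*$ --- standard for Kac algebras but not all explicitly listed in the paper, so cite or state them. Second, the paper only records that $\LIQ\cdot\vphi$ is dense in $\LOQ$; to get density of $\A\cdot\vphi$ you should add the one-line estimate $\norm{(a-x)\cdot\vphi}_{\LOQ}\leq\norm{\Lphi(a-x)}_{\LTQ}$ together with density of $\Lphi(\A)$ in $\LTQ$. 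Third, ``these matrix units exhaust the algebra'' needs one more sentence: the $z_\al$ are pairwise orthogonal central projections, each corner $z_\al\,\ell^\infty(\h{\G})\,z_\al$ equals the finite-dimensional (hence weak*-closed) span of $\{e_{ij}^\al\}_{i,j}$, and $\sum_\al z_\al=1$ because $p:=1-\sum_\al z_\al$ annihilates the weak*-dense span $\lm(\A\cdot\vphi)$, so $p=p\cdot p=0$ by separate weak*-continuity of multiplication; the map $x\mapsto(z_\al x)_\al$ then gives the normal isomorphism onto $\prod_{\al\in\Irr}M_{n_\al}$. With these additions the argument is complete.
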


For a compact algebra $\G$, let $C_u(\G)$ be its corresponding universal $C^*$-algebra (see \cite{K} for details). There is a universal co-multiplication $\Gam_u:C_u(\G)\rightarrow C_u(\G)\ten_{\min}C_u(\G)$ satisfying $(\pi\ten\pi)\circ\Gam_u=\Gam\circ\pi$, where $\pi:C_u(\G)\rightarrow C(\G)$ is the canonical quotient map. This gives rise to a universal compact quantum group structure on $C_u(\G)$. In particular, there is a $*$-algebra $\mc{A}_u$ of universal matrix coefficients $\widetilde{u_{ij}^\al}$ which is dense in $C_u(\G)$, such that $\pi(\widetilde{u_{ij}^\al})=u_{ij}^\al$, $\al\in\Irr$, $1\leq i,j\leq n_\al$. The operator space dual $C_u(\G)^*$ inherits a canonical completely contractive Banach algebra structure, and the canonical quotient $\pi$ induces a completely isometric homomorphism from $M(\G)$ into $C_u(\G)^*$ which allows us to identify $M(\G)$ with a norm-closed two-sided ideal in $C_u(\G)^*$. 

\begin{examples}
\begin{enumerate}
\item If $G$ is a compact group, then $\G=(\LI,\Gam,\vphi)$ is a compact Kac algebra where $\Gam(f)(s,t)=f(st)$, and $\vphi$ is integration with respect to a normalized Haar measure on $G$. Here $C(\G)=C(G)$, $\LOQ=\LO$ is the classical convolution algebra, and $M(\G)=C_u(\G)^*=M(G)$ is the usual measure algebra. The dual discrete quantum group $\h{\G}$ in this case is represented by the atomic group von Neumann algebra $VN(G)=\prod_{\al\in\h{G}} M_{n_\al}$. 
\item If $G$ is a discrete group, then $\G=(VN(G),\Gam,\vphi)$ is a compact Kac algebra where $VN(G)$ is the group von Neumann algebra with co-multiplication $\Gam(\lm(t))=\lm(t)\ten\lm(t)$, and $\vphi=\la(\cdot)\delta_e,\delta_a\ra$ is the canonical trace on $VN(G)$. Here, $C(\G)=C^*_\lm(G)$, the reduced group $C^*$-algebra, $\LOQ=A(G)$ is the Fourier algebra, $M(\G)=B_\lm(G)$ is the reduced Fourier--Stieltjes, and $C_u(\G)^*=B(G)$ is the Fourier--Stieltjes algebra. 
\item Let $N\geq2$ be an integer and let $A_o(N)$ be the universal $C^*$-algebra generated by $N^2$ elements $u_{ij}$ such that the matrix $u=[u_{ij}]$ is unitary and $\overline{u}=u$, where $\overline{u}=[u_{ij}^*]$. Define $\Gam_u:A_o(N)\rightarrow A_o(N)\ten_{\min}A_o(N)$ on the generators by
$$\Gam_u(u_{ij})=\sum_{k=1}^Nu_{ik}\ten u_{kj}, \ \ \ 1\leq i,j\leq N.$$
There exists a unique Haar state $\vphi$ on $A_o(N)$ satisfying $(\id\ten\vphi)\Gam_u(x)=(\vphi\ten\id)\Gam_u(x)=\vphi(x)1$, $x\in A_o(N)$. The GNS construction $(\pi_\vphi,\xi_\vphi)$ of $\vphi$ then yields a von Neumann algebra $L^{\infty}(O_N^+):=\pi_\vphi(A_o(N))''\subseteq\mc{B}(H_\vphi)$ and a co-multiplication $\Gam: L^{\infty}(O_N^+)\rightarrow L^{\infty}(O_N^+)\oten L^{\infty}(O_N^+)$ such that $O_N^+=(L^{\infty}(O_N^+),\Gam,\om_{\xi_\vphi})$ becomes a compact Kac algebra, called the \e{free orthogonal quantum group}. In this example, $C_u(O_N^+)=A_o(N)$ and $C(O_N^+)=\pi_{\vphi}(A_o(N))\subseteq\mc{B}(H_\vphi)$. Recall that the universal \textit{commutative} $C^*$-algebra generated by $u_{ij}$ satisfying the above relations is nothing but $C(O(N))$, the commutative $C^*$-algebra of continuous functions on the orthogonal group $O(N)$.
\end{enumerate}
\end{examples}

For a compact group $G$, it is well-known that $C(G)$ is an operator algebra under convolution \cite{V}. Dually, for a discrete group $G$, it is folklore that $C^*_\lm(G)$ is an operator algebra under pointwise multiplication. We now show that this phenomena persists at the level of compact Kac algebras. 

First, $\G$ is a compact Kac algebra, then for $a,b,x\in\A$, we have
\begin{align*}\la (a\cdot\vphi)\star(b\cdot\vphi),x\ra&=a\cdot\vphi((\id\ten\vphi)(\Gam(x)(1\ten b)))=a\cdot\vphi\circ R((\id\ten\vphi)((1\ten x)\Gam(b)))\\
&=R(a)\cdot\vphi((\id\ten\vphi)((1\ten x)\Gam(b)))\\
&=\la(R(a)\cdot\vphi\ten\id)\Gam(b),x\cdot\vphi\ra.\end{align*}
Hence, the convolution product on $\LOQ$ induces a Banach algebra multiplication $C(\G)\ten^{\gamma} C(\G)\rightarrow C(\G)$ satisfying
$$a\star b = (R(a)\cdot\vphi\ten\id)\Gam(b), \ \ \ a,b\in C(\G).$$
A similar argument also works at the universal level, yielding a convolution Banach algebra structure on $C_u(\G)$. At the level of irreducible coefficients
$$u^\al_{ij}\star u^\be_{kl}=((u^\al_{ji})^*\cdot\vphi\ten\id)\Gam(u_{kl}^\be)=\frac{\delta_{\al\be}\delta_{jk}}{n_\al}u_{il}^\al.$$
The following Lemma refers to the canonical Banach algebra structure on $C_u(\G)^*$.

\begin{lem}\label{l:dualBA}
Let $\G$ be a compact quantum group. Then $C_u(\G)^*\ehten C_u(\G)^*$ is a dual completely contractive Banach algebra.
\end{lem}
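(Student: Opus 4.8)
The plan is to realize $C_u(\G)^* \ehten C_u(\G)^*$ as the dual of $C_u(\G) \hten C_u(\G)$, equip it with the canonical component-wise product inherited from $C_u(\G)^*$, and then verify the two defining properties of a dual completely contractive Banach algebra: separate weak*-continuity of the multiplication and its complete contractivity. First I would record that $C_u(\G)^*$ is itself a dual completely contractive Banach algebra: its operator space structure is that of the dual of $C_u(\G)$, its product $\star=\Gam_u^*$ is completely contractive by hypothesis, and it is separately weak*-continuous since for fixed $g$ one has $\la f\star g,x\ra=\la f,(\id\ten g)\Gam_u(x)\ra$ with $(\id\ten g)\Gam_u(x)\in C_u(\G)$ (and symmetrically in the other variable). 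As $C_u(\G)^*$ is a dual operator space with predual $C_u(\G)$, the identity $X\ehten Y=(X_*\hten Y_*)^*$ from the preliminaries yields $C_u(\G)^* \ehten C_u(\G)^* = (C_u(\G)\hten C_u(\G))^*$, so our space is a dual operator space with predual $P:=C_u(\G)\hten C_u(\G)$.

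Next I would treat separate weak*-continuity, which I expect to be the conceptually clean step. For $G=g_1\ten g_2$, a direct computation on elementary tensors shows that the partial product $F\mapsto F\cdot G$ is the adjoint of $R_{g_1}\hten R_{g_2}$ acting on $P$, where $R_g=(\id\ten g)\Gam_u:C_u(\G)\to C_u(\G)$ is the completely bounded right translation by $g$, with $\norm{R_g}_{cb}\le\norm{g}$; indeed $\la F\cdot G,x\ten y\ra=\la F,(R_{g_1}\hten R_{g_2})(x\ten y)\ra$. By functoriality of the Haagerup tensor product, $R_{g_1}\hten R_{g_2}$ is completely bounded on $P$, so its adjoint is weak*-continuous. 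Passing from elementary $G$ to arbitrary $G$ by density, together with the uniform bound $\norm{F\mapsto F\cdot G}\le\norm{G}$ and a check of weak*-continuity on the unit ball via Krein--Smulian, gives separate weak*-continuity. The key point I would stress is that one must \emph{not} try to bound a joint co-multiplication $P\to P\hten P$: that map need not be bounded (already for abelian $\G$), and separate weak*-continuity instead follows from functoriality applied to the one-sided translation operators.

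The main obstacle is complete contractivity of the multiplication
$$(C_u(\G)^*\ehten C_u(\G)^*)\pten(C_u(\G)^*\ehten C_u(\G)^*)\rightarrow C_u(\G)^*\ehten C_u(\G)^*.$$
Here the plan is to factor the product as the canonical interchange (shuffle) map into $(C_u(\G)^*\pten C_u(\G)^*)\ehten(C_u(\G)^*\pten C_u(\G)^*)$ followed by $m\ehten m$, where $m:C_u(\G)^*\pten C_u(\G)^*\to C_u(\G)^*$ is the completely contractive product. Complete contractivity of the interchange map is a standard feature of the interplay between $\pten$ and the Haagerup-type tensor products, while complete contractivity of $m\ehten m$ follows from functoriality of the extended Haagerup tensor product applied to the complete contraction $m$. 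The delicate part—and the step I expect to absorb most of the work—is justifying these two facts in the extended Haagerup setting, since $\ehten$ is a weak*/dual tensor product whose functoriality and interchange properties must be invoked with the correct continuity hypotheses. It is precisely the dual structure established in the first paragraph and the $C^*$-algebraic nature of $C_u(\G)$ (so that $\Gam_u$ is a $*$-homomorphism into the minimal tensor product and the Haagerup norm is multiplicative) that make these estimates go through, after which associativity of the product is routine and the lemma follows.
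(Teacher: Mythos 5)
Your overall architecture is close to the paper's: the identification $C_u(\G)^*\ehten C_u(\G)^*=(C_u(\G)\hten C_u(\G))^*$, the componentwise product, and the shuffle-map factorization for complete contractivity are exactly the content of the result of Daws that the paper simply cites for this part; the paper's own proof consists entirely of the separate weak*-continuity check. It is precisely there that your argument has a genuine gap.

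The problem is your passage from elementary tensors to general elements of the extended Haagerup tensor product. The span of elementary tensors is only \emph{weak*}-dense in $C_u(\G)^*\ehten C_u(\G)^*$; its \emph{norm} closure is $C_u(\G)^*\hten C_u(\G)^*$, which is in general a proper subspace. Consequently, checking that $F\mapsto F\cdot G$ agrees with the weak*-continuous map $(R_{g_1}\hten R_{g_2})^*$ on elementary $F$ does not show the two bounded maps coincide everywhere — to conclude that, you would already need weak*-continuity of $F\mapsto F\cdot G$, which is the statement being proved. The same circularity blocks the step ``passing from elementary $G$ to arbitrary $G$ by density'': with $G_j\to G$ weak* and $F_i\to F$ a bounded weak* net, the three-term estimate requires $\la F_i\cdot(G-G_j),\om\ra\to 0$ \emph{uniformly} in $i$, and no such uniform control is available; Krein--Smulian cannot substitute for it, since you have no formula for $\la F_i\cdot G,\om\ra$ when $G$ is not elementary. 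The way out — and this is what the paper does — is to establish the pairing formula
$$\la F\cdot G,\,x\ten y\ra=\la F\ten G,\,\Sigma_{23}(\Gam_u(x)\ten\Gam_u(y))\ra,\qquad x,y\in\A_u,$$
valid for \emph{all} $F,G$, by unwinding the definition of the product on genuine infinite-sum representatives $F=\sum_i f_i^1\ten f_i^2$, $G=\sum_j g_j^1\ten g_j^2$ (the interchange of sums being justified by the row/column norm bounds). Since $\Gam_u(x)\ten\Gam_u(y)$ is a finite sum of elementary tensors for $x,y\in\A_u$, the right-hand side is a finite sum of terms each separately weak*-continuous in $F$ and in $G$; testing a bounded net $F_i\to F$ against the norm-dense subspace $\A_u\ten\A_u$ of $C_u(\G)\hten C_u(\G)$ then yields $F_i\cdot G\to F\cdot G$ weak* (and symmetrically in the other variable), after which Krein--Smulian gives full separate weak*-continuity. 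In short: replace the elementary-tensor/adjoint argument by a verification of this formula on general elements of $\ehten$; agreement on the algebraic tensor product alone cannot carry the proof.
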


\begin{proof} By \cite[\S9.1]{D}, we know that $C_u(\G)^*\ehten C_u(\G)^*$ is a completely contractive Banach algebra. Suppose $(\mu_i)$ is a bounded net converging weak* to $\mu$ in $C_u(\G)^*\ehten C_u(\G)^*=(C_u(\G)\hten C_u(\G))^*$. For $\nu\in C_u(\G)^*\ehten C_u(\G)^*$ and $x,y\in\A_u$ we have
$$\la\mu_i\star\nu,x\ten y\ra=\la\mu_i\ten\nu,\Sigma_{23}(\Gam_u(x)\ten\Gam_u(y))\ra\rightarrow\la \mu\star\nu,x\ten y\ra,$$
where $\Sigma_{23}$ is the flip map on the second and third factors. By boundedness of $(\mu_i)$ together with linearity and density of $\A_u\ten\A_u$ in $C_u(\G)\hten C_u(\G)$, it follows that $\mu_i\star\nu\rightarrow \mu\star\nu$ weak*. Similarly, $\nu\star\mu_i\rightarrow\nu\star\mu$ weak*, and $C_u(\G)^*\ehten C_u(\G)^*$ is a dual Banach algebra.
\end{proof}

\begin{prop}\label{p:opalg}
Let $\G$ be a compact Kac algebra. Then $C_u(\G)$ and $C(\G)$ are operator algebras under convolution.
\end{prop}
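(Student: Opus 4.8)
The plan is to verify the operator algebra criterion of \cite[Theorem~5.2.1]{BL}: a completely contractive Banach algebra is completely isomorphic to an operator algebra exactly when its product extends to a completely bounded map on the Haagerup tensor product. Since the canonical surjection $\pi:C_u(\G)\to C(\G)$ is a complete quotient map which intertwines the two convolutions (because $\Gam\circ\pi=(\pi\ten\pi)\circ\Gam_u$), the algebra $(C(\G),\star)$ is a quotient of $(C_u(\G),\star)$ by a closed two-sided ideal; as quotients of operator algebras are again operator algebras, it suffices to treat $C_u(\G)$. I would therefore aim to show that $\star:C_u(\G)\hten C_u(\G)\to C_u(\G)$ is completely bounded. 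Passing to the adjoint and invoking Lemma~\ref{l:dualBA} for the identification $(C_u(\G)\hten C_u(\G))^*=C_u(\G)^*\ehten C_u(\G)^*=:\mc{D}$, this is equivalent to showing that $\star^*:C_u(\G)^*\to\mc{D}$ is completely bounded.

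First I would compute $\star^*$ on the weak*-dense subspace of $C_u(\G)^*$ spanned by the functionals dual to the matrix coefficients $\{u^\al_{ij}\}$. Writing $\mu^\al_{il}=\la\mu,u^\al_{il}\ra$ and using the coefficient formula $u^\al_{ij}\star u^\be_{kl}=\frac{\delta_{\al\be}\delta_{jk}}{n_\al}u^\al_{il}$, one obtains $\la\star^*\mu,u^\al_{ij}\ten u^\be_{kl}\ra=\frac{\delta_{\al\be}\delta_{jk}}{n_\al}\mu^\al_{il}$. In particular $\star^*$ is block-diagonal over $\Irr$, carrying the $\al$-component of $\mu$ into the $(\al,\al)$-corner of $\mc{D}$. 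Identifying the coefficient functionals with the matrix units of the block $M_{n_\al}\subseteq\ell^\infty(\h{\G})$ furnished by Lemma~\ref{l:matrixunits} (up to passing to the conjugate representation $\overline{\al}$), the $\al$-block of $\star^*$ becomes the normalised matrix coproduct $\frac{1}{n_\al}\delta_\al:M_{n_\al}\to M_{n_\al}\ehten M_{n_\al}$, where $\delta_\al(e_{il})=\sum_{j=1}^{n_\al}e_{ij}\ten e_{jl}$ and the codomain inherits the extended Haagerup structure from $\mc{D}$.

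The crux is a uniform complete contractivity estimate for these block maps. Here I would use the complete isometry $M_{n_\al}=C_{n_\al}\hten R_{n_\al}$ (column times row) to write $e_{il}=c_i\ten r_l$, so that $\delta_\al(e_{il})=c_i\ten t_\al\ten r_l$ with $t_\al=\sum_{j}r_j\ten c_j\in R_{n_\al}\hten C_{n_\al}$. Thus $\delta_\al$ is nothing but insertion of the ``identity element'' $t_\al$ into the middle leg, whence $\|\delta_\al\|_{cb}=\|t_\al\|_h$. Since $\|t_\al\|_h=\|\sum_j r_jr_j^*\|^{1/2}\,\|\sum_j c_j^*c_j\|^{1/2}=\sqrt{n_\al}\cdot\sqrt{n_\al}=n_\al$, the normalisation $\frac{1}{n_\al}$ cancels this growth exactly, and each block map $\frac{1}{n_\al}\delta_\al$ is completely contractive, \emph{uniformly} in $\al\in\Irr$. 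Assembling these maps over the orthogonal corners of the $\ell^\infty$-sum $\ell^\infty(\h{\G})=\prod_\al M_{n_\al}$ then produces a complete contraction on the dense subspace, which extends to all of $C_u(\G)^*$ by weak* density since $\mc{D}$ is a dual operator space. Hence $\star^*$, and therefore $\star$, is completely bounded (indeed completely contractive), and \cite[Theorem~5.2.1]{BL} yields that $C_u(\G)$, and consequently $C(\G)$, is an operator algebra under convolution.

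The step I expect to be the main obstacle is this last uniform estimate together with the assembling of the blocks: one must control the normalised coproducts simultaneously for all, arbitrarily large, irreducible corepresentations in the \emph{extended Haagerup} norm of $\mc{D}$, and check that the block-diagonal maps combine into a single completely bounded map into $C_u(\G)^*\ehten C_u(\G)^*$. This is precisely where the geometry of the Haagerup tensor product is indispensable: the factorisation $M_{n_\al}=C_{n_\al}\hten R_{n_\al}$ and the norm $\|t_\al\|_h=n_\al$ of the identity element $t_\al\in R_{n_\al}\hten C_{n_\al}$ produce the exact cancellation of $n_\al$, whereas the projective or injective tensor product would not.
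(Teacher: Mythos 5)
There is a genuine gap, and it occurs at exactly the step you flagged as the crux, plus one more at the extension step. Your formal computation of $\star^*$ on coefficient functionals is correct: writing $\om^\al_{ij}:=n_\al\,(u^\al_{ij})^*\cdot\vphi\in C_u(\G)^*$ for the functionals with $\la \om^\al_{ij},u^\be_{kl}\ra=\delta_{\al\be}\delta_{ik}\delta_{jl}$, one indeed gets $\star^*(\om^\al_{il})=\frac{1}{n_\al}\sum_j \om^\al_{ij}\ten\om^\al_{jl}$, and your insertion argument correctly shows that $\frac{1}{n_\al}\delta_\al:M_{n_\al}\rightarrow M_{n_\al}\hten M_{n_\al}$ is completely contractive. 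But this estimate is computed in the wrong operator spaces. The space you must control is $\mathrm{span}\{\om^\al_{ij}\}$ with the operator space structure inherited from $C_u(\G)^*$ (and, on the codomain side, the structure inherited from $C_u(\G)^*\ehten C_u(\G)^*$), not the block $M_{n_\al}\subseteq\ell^\infty(\h{\G})$. The identification $\om^\al_{ij}\leftrightarrow e^{\overline{\al}}_{ij}$ of Lemma~\ref{l:matrixunits} is implemented by the Fourier transform $\lm$, which is completely contractive in one direction only; its inverse is unbounded, and the distortion grows with $n_\al$. Concretely, for $G=SU(2)$ the matrix $I_{n_\al}=\sum_i e^{\overline{\al}}_{ii}$ has norm $1$ in $M_{n_\al}$, while the corresponding functional $\sum_i\om^\al_{ii}=n_\al\,\overline{\chi^\al}\cdot\vphi$ has norm $n_\al\norm{\chi^\al}_{L^1(G)}\asymp n_\al$ in $M(G)$. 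So the factor $\frac{1}{n_\al}$ you believe cancels the coproduct growth is in fact consumed by the norm distortion of the identification itself, and uniform contractivity of the abstract block maps gives no bound on $\star^*$ between the actual spaces. (A further problem: even with uniform block bounds, the corners do not assemble, since $C_u(\G)^*\ehten C_u(\G)^*$ is not an $\ell^\infty$-product of them.) The correct way to realize your $n_\al$-cancellation with the right norms is the $L^2$-factorization the paper uses in Theorem~\ref{t:H(C(G))}: $\Lphi:C(\G)\rightarrow\LTQ_r,\LTQ_c$ (Lemma~\ref{l:rcfactorization}) together with $\LTQ_r\hten\LTQ_c\cong\TCQ$ — note this lives on the \emph{reduced} algebra.

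The second gap is the passage from the coefficient functionals to all of $C_u(\G)^*$. Every $\om^\al_{ij}$ factors through the reduction $\pi:C_u(\G)\rightarrow C(\G)$ (because the Haar state does), so the weak*-closed span of these functionals lies inside $\pi^*(C(\G)^*)$, which is a \emph{proper} weak*-closed subspace of $C_u(\G)^*$ whenever $\G$ fails to be co-amenable (e.g.\ $C_u(\G)=C^*(\mathbb{F}_2)$); thus your ``weak*-dense subspace'' is not weak* dense, and your argument could at best see the reduced algebra. Moreover, even where density holds, a completely bounded map defined on a weak*-dense subspace does not ``extend by weak* density'': one must exhibit a preadjoint, i.e.\ an actual map on $C_u(\G)\hten C_u(\G)$. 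This is precisely the mechanism the paper's proof supplies and that your outline lacks: the state $\mu_0=\om_{\Lphi(1)}\circ(\lm\times\rho)$ on $C_u(\G)\ten_{\max}C_u(\G)$ restricts to a functional on the Haagerup tensor product, Lemma~\ref{l:dualBA} shows $\mu\mapsto\mu_0\star(1\ten\mu)$ is weak*-weak* continuous on $C_u(\G)^*$, and the convolution map is then obtained as the preadjoint, with your coefficient formula serving only as the verification that this preadjoint really is convolution. Your reduction of $C(\G)$ to a quotient of $C_u(\G)$ is fine; what is missing is any device producing a map on the universal algebra, together with correct norm control on the blocks.
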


\begin{proof}
Let $\vphi$ and $R$ denote the Haar trace and unitary antipode on $C(\G)$, as well as their universal extensions to $C_u(\G)$. Define $*$-homomorphisms $\lm,\rho:C_u(\G)\rightarrow\BLTQ$ by
$$\lm(x)\Lphi(y)=\Lphi(xy), \ \ \ \rho(x)\Lphi(y)=\Lphi(y R(x)), \ \ \ x,y\in C_u(\G).$$
Since $\lm$ and $\rho$ have commuting ranges, we obtain a canonical representation $\lm\times\rho:C_u(\G)\ten_{\max}C_u(\G)\rightarrow\BLTQ$. Composing with $\om_{\Lphi(1)}$, we obtain a state 
$$\mu_0:=\om_{\Lphi(1)}\circ\lm\times\rho\in(C_u(\G)\hten C_u(\G))^*$$ 
by \cite[Theorem~9.4.1]{ER}. Let $1$ denote the unit in $C_u(\G)^*$, and consider the map
$$C_u(\G)^*\ni \mu\mapsto\mu_0\star(1\ten\mu)\in C_u(\G)^*\ehten C_u(\G)^*.$$
By Lemma~\ref{l:dualBA} this map is weak*-weak* continuous. Its pre-adjoint
$$m:=C_u(\G)\hten C_u(\G)\rightarrow C_u(\G)$$
satisfies
\begin{align*}\la m(u_{ij}^\al\ten u_{kl}^\be),\mu\ra&=\la u_{ij}^\al\ten \mu\star u_{kl}^\be,\mu_0\ra=\sum_{n=1}^{n_\be}\la\mu, u_{nl}^\be\ra\la\mu_0,u_{ij}^\al\ten u_{kn}^\be\ra\\
&=\sum_{n=1}^{n_\be}\la\mu, u_{nl}^\be\ra\la\lm(u_{ij}^\al)\rho(u_{kn}^\be)\Lphi(1),\Lphi(1)\ra\\
&=\sum_{n=1}^{n_\be}\la\mu, u_{nl}^\be\ra\vphi(u_{ij}^\al (u_{nk}^\be)^*)=\frac{\delta_{\al\be}\delta_{jk}}{n_\al}\la\mu,u_{il}^\be\ra\\
&=\la u_{ij}^\al\star u_{kl}^\be,\mu\ra.\end{align*}
Hence, convolution product is bounded in the Haagerup tensor product, making $C_u(\G)$ an operator algebra under convolution.

Now, if $f\in\A\cdot\vphi\subseteq C_u(\G)^*$, then the above calculation implies that $m^*(f)=\mu_0\star(1\ten f)\in\A\cdot\vphi\ten\A\cdot\vphi\subseteq M(\G)\ehten M(\G)$. By projectivity of the Haagerup tensor product, the canonical map $C_u(\G)\hten C_u(\G)\rightarrow C(\G)\hten C(\G)$ is a complete quotient map, so that
$$M(\G)\ehten M(\G)\hookrightarrow C_u(\G)^*\ehten C_u(\G)^*$$
completely isometrically. Since $M(\G)$ is the weak* closure of $\LOQ$ in $C_u(\G)^*$, it follows that $m^*|_{M(\G)}:M(\G)\rightarrow M(\G)\ehten M(\G)$ is weak*-weak* continuous and 
$$(m^*|_{M(\G)})_*:C(\G)\hten C(\G)\rightarrow C(\G)$$
implements the convolution product on $C(\G)$.
\end{proof}

We now show that for a large class of compact Kac algebras, the Haagerup dual of the operator algebra $C(\G)$ is precisely $\ell^\infty(\h{\G})$. In particular, for compact groups $G$, the Haagerup dual of the convolution algebra $C(G)$ is the group von Neumann algebra $VN(G)$. Here, we again see a reflection of quantum group duality through our new notion.

\begin{lem}\label{l:rcfactorization}
Let $\G$ be a compact Kac algebra. The canonical inclusion 
$$\LIQ\ni x\mapsto \Lphi(x)\in\LTQ$$
extends to a complete contraction when $\LTQ$ is furnished with the column, $\LTQ_c$, or row, $\LTQ_r$, operator space structure.
\end{lem}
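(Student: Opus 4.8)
The plan is to verify complete contractivity directly on each matrix level, reducing both assertions to the complete positivity of the Haar state. Since $\G$ is compact, $\vphi$ is a (normalized, faithful, normal) tracial \emph{state}, so $\Nphi=\LIQ$ and $\Lphi$ is genuinely defined on all of $\LIQ$; moreover $\vphi:\LIQ\to\C$ is unital completely positive, hence so is its $n$-th amplification $\vphi^{(n)}:M_n(\LIQ)\to M_n$, which is therefore completely contractive. The other ingredient is the standard description of the row and column matrix norms: for $[\xi_{ij}]\in M_n(H)$,
\begin{equation*}
\norm{[\xi_{ij}]}^2_{M_n(H_c)}=\norm{\Big[\,\textstyle\sum_{m}\la\xi_{mj},\xi_{mi}\ra\,\Big]_{ij}}_{M_n}, \qquad \norm{[\xi_{ij}]}^2_{M_n(H_r)}=\norm{\Big[\,\textstyle\sum_{m}\la\xi_{im},\xi_{jm}\ra\,\Big]_{ij}}_{M_n}.
\end{equation*}
Throughout I use the GNS inner product $\la\Lphi(a),\Lphi(b)\ra=\vphi(b^*a)$, and write $X=[x_{ij}]\in M_n(\LIQ)$.

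For the \textbf{column case}, I would apply the amplified map $\Lphi^{(n)}:X\mapsto[\Lphi(x_{ij})]$ and compute the associated Gram matrix entrywise:
\begin{equation*}
\sum_m\la\Lphi(x_{mj}),\Lphi(x_{mi})\ra=\sum_m\vphi(x_{mi}^*x_{mj})=\vphi\big((X^*X)_{ij}\big),
\end{equation*}
so that this Gram matrix is exactly $\vphi^{(n)}(X^*X)$. Hence
\begin{equation*}
\norm{[\Lphi(x_{ij})]}^2_{M_n(\LTQ_c)}=\norm{\vphi^{(n)}(X^*X)}_{M_n}\leq\norm{X^*X}_{M_n(\LIQ)}=\norm{X}^2_{M_n(\LIQ)},
\end{equation*}
using that $\vphi^{(n)}$ is completely contractive and that $M_n(\LIQ)$ is a $C^*$-algebra. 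This gives the column bound. Note that this step uses only positivity of $\vphi$, not traciality.

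For the \textbf{row case}, the Gram matrix entry is $\sum_m\la\Lphi(x_{im}),\Lphi(x_{jm})\ra=\sum_m\vphi(x_{jm}^*x_{im})$, which is \emph{not} directly of the form $\vphi^{(n)}$ of a positive element. This is where the Kac property enters, and it is the one point requiring care: invoking traciality of $\vphi$ I rewrite $\vphi(x_{jm}^*x_{im})=\vphi(x_{im}x_{jm}^*)$, so that the entry becomes $\vphi\big((XX^*)_{ij}\big)$ and the whole Gram matrix equals $\vphi^{(n)}(XX^*)$. The same estimate as above, now with $XX^*$ in place of $X^*X$, yields $\norm{[\Lphi(x_{ij})]}^2_{M_n(\LTQ_r)}\leq\norm{XX^*}_{M_n(\LIQ)}=\norm{X}^2_{M_n(\LIQ)}$. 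Thus $\Lphi$ is a complete contraction into $\LTQ_r$ as well. There is no serious obstacle here beyond correctly matching index conventions in the two Gram-matrix formulas and recognizing that the row estimate genuinely depends on the trace property (so that the Kac assumption on $\G$ cannot be dropped for the row structure), whereas the column estimate holds for any compact quantum group.
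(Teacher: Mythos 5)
Your proof is correct and is essentially the same argument as the paper's: both compute the Gram matrix of $[\Lphi(x_{ij})]$ for the column and row structures, identify it with $\vphi^{(n)}(X^*X)$ and (via traciality) $\vphi^{(n)}(XX^*)$ respectively, and conclude by contractivity of the amplified Haar state. Your added observation that the column estimate needs only positivity of $\vphi$ while the row estimate genuinely uses the Kac (trace) property is accurate, but the route is identical to the paper's.
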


\begin{proof}
Let $x =[x_{ij}] \in M_n(\LIQ)$.  Then
\begin{align*}\norm{[\Lphi(x_{ij})]}_c&=\norm{[\sum_{k=1}^n\la\Lphi(x_{kj}),\Lphi(x_{ki})\ra]}^{1/2}=\norm{[\sum_{k=1}^n\vphi(x_{ki}^*x_{kj})]}^{1/2}=\norm{\vphi_n([[x_{ij}]^*[x_{ij}])}^{1/2}\\
&\leq\norm{[[x_{ij}]^*[x_{ij}]}^{1/2}=\norm{[x_{ij}]}.\end{align*}
Using the traciality of $\vphi$, we also have
\begin{align*}\norm{[\Lphi(x_{ij})]}_r&=\norm{[\sum_{k=1}^n\la\Lphi(x_{ik}),\Lphi(x_{jk})\ra]}^{1/2}=\norm{[\sum_{k=1}^n\vphi(x_{jk}^*x_{ik})]}^{1/2}\\
&=\norm{[\sum_{k=1}^n\vphi(x_{ik}x_{jk}^*)]}^{1/2}=\norm{\vphi_n([[x_{ij}][x_{ij}]^*)}^{1/2}\\
&\leq\norm{[[x_{ij}][x_{ij}]^*}^{1/2}=\norm{[x_{ij}]}.\end{align*}
\end{proof}

\begin{lem}\label{l:antipodeformula2} Let $\G$ be a compact Kac algebra, $x\in\LIQ$, and $f\in\LOQ$. Then $(f\star x)\cdot\vphi=(x\cdot\vphi)\star f^{o^*}$, and $(x\star f^*)\cdot\vphi=f^o\star(x\cdot\vphi)$.
\end{lem}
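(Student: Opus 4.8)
The plan is to verify each identity by testing both sides against an arbitrary $y\in\LIQ$ and then reducing everything to the two strong-invariance relations recorded just before the statement. Throughout I will use that the module actions are slices of the coproduct, namely $f\star x=(\id\ten f)\Gam(x)$ and $x\star f=(f\ten\id)\Gam(x)$, which follow directly from the definitions $\la f\star x,g\ra=\la x,g\star f\ra$, $\la x\star f,g\ra=\la x,f\star g\ra$ together with $g\star f=\Gam_*(g\ten f)$; the functional $x\cdot\vphi$ is determined by $\la x\cdot\vphi,y\ra=\vphi(yx)=\vphi(xy)$; the convolution on $\LOQ$ is $\la h_1\star h_2,y\ra=\la h_1\ten h_2,\Gam(y)\ra$; and I will need the involution formulas $f^o=f^*\circ R$ (given) and $f^{o^*}=f\circ R$, the latter being immediate from the former together with the fact that $R$ is $*$-preserving (so $R(z^*)^*=R(z)$).

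For the first identity, pairing the left-hand side with $y$ and unwinding $f\star x=(\id\ten f)\Gam(x)$ and the definition of $x\cdot\vphi$ should give
$$\la(f\star x)\cdot\vphi,y\ra=f\big((\vphi\ten\id)((y\ten 1)\Gam(x))\big).$$
On the right, expanding the convolution $(x\cdot\vphi)\star f^{o^*}$ and substituting $f^{o^*}=f\circ R$ should give
$$\la(x\cdot\vphi)\star f^{o^*},y\ra=f\big(R((\vphi\ten\id)(\Gam(y)(x\ten 1)))\big).$$
The two expressions then coincide by the second strong-invariance relation $R((\vphi\ten\id)((x\ten 1)\Gam(y)))=(\vphi\ten\id)(\Gam(x)(y\ten 1))$, after swapping the roles of $x$ and $y$ and applying $R$ to both sides (using $R^2=\id$). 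As $f\in\LOQ$ is arbitrary, the two functionals agree.

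The second identity is entirely analogous, but now it is $f^*$ that appears and the first strong-invariance relation that is needed. Pairing $(x\star f^*)\cdot\vphi$ with $y$ and using $x\star f^*=(f^*\ten\id)\Gam(x)$ should produce $f^*\big((\id\ten\vphi)(\Gam(x)(1\ten y))\big)$, while expanding $f^o\star(x\cdot\vphi)$ and substituting $f^o=f^*\circ R$ should produce $f^*\big(R((\id\ten\vphi)((1\ten x)\Gam(y)))\big)$. Here the traciality of $\vphi$ is used to bring both slices into the exact form appearing in the relation (for instance to rewrite $(R\ten\vphi)(\Gam(y)(1\ten x))$ as $R((\id\ten\vphi)((1\ten x)\Gam(y)))$). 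These two expressions then match directly via the first strong-invariance relation $(\id\ten\vphi)(\Gam(x)(1\ten y))=R((\id\ten\vphi)((1\ten x)\Gam(y)))$.

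I expect the only real difficulty to be the bookkeeping: keeping the slicing and leg conventions straight, tracking the involutions $(\cdot)^o$, $(\cdot)^*$ and $(\cdot)^{o^*}$ correctly, and selecting the correct one of the two strong-invariance relations (with the right variable substitution) in each case. The place where the Kac (rather than merely compact) hypothesis genuinely enters is the use of traciality of $\vphi$ in the second identity, to convert products of the form $\Gam(y)(1\ten x)$ into $(1\ten x)\Gam(y)$ after slicing; this is easy to overlook but is exactly what allows the computed slices to coincide with those in the stated relation.
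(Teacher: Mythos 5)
Your proposal is correct and follows essentially the same route as the paper: both sides are paired against an arbitrary $y\in\LIQ$ and the identities are reduced to the two antipode (strong-invariance) relations recorded just before the lemma. The only difference is bookkeeping---the paper applies those relations at $x^*,y^*$ and tracks complex conjugates, whereas you apply $R$ directly to the relation using $R^2=\id$ and $f^{o^*}=f\circ R$; these manipulations are equivalent since $R$ is a $*$-preserving involution on a compact Kac algebra.
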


\begin{proof} For $y\in\LIQ$, we have $\la(f\star x)\cdot\vphi,y\ra=\vphi(y(\id\ten f)\Gam(x))=f((\vphi\ten\id)(y\ten1)\Gam(x))$. By the antipode relations we have
\begin{align*}
f((\vphi\ten\id)(y\ten1)\Gam(x))&=f(R((\vphi\ten\id)(x^*\ten1)\Gam(y^*))^*)=\overline{f^o((\vphi\ten\id)(x^*\ten1)\Gam(y^*))}\\
&=\overline{\vphi(x^*(\id\ten f^o)\Gam(y^*))}=\vphi((\id\ten f^{o^*})\Gam(y)x)\\
&=\la(x\cdot\vphi)\star f^{o^*},y\ra.
\end{align*}
The second relation follows from a similar calculation.
\end{proof}

\begin{lem}\label{l:matrixentries} Let $\G$ be a compact Kac algebra, and let $f\in\LOQ$. Then for every $\al\in\Irr$,
$1\leq i,j \leq n_\al$,
$$\lm(f)^\al_{ij}=\la f,u_{ij}^{\overline{\al}}\ra$$
\end{lem}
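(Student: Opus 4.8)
The plan is to show that the two sides of the claimed identity define the same element of $\LIQ=(\LOQ)^*$, and to verify this by testing against a norm-dense subspace of $\LOQ$. For fixed $\al\in\Irr$ and $1\le i,j\le n_\al$, the assignment $f\mapsto\lm(f)^\al_{ij}$ is the composition of the completely contractive map $\lm:\LOQ\to\ell^\infty(\h{\G})$ with the bounded coordinate functional $T\mapsto T^\al_{ij}$ on $\ell^\infty(\h{\G})=\prod_{\be\in\Irr}M_{n_\be}$; hence it is a bounded linear functional on $\LOQ$, that is, an element of $\LIQ$. The right-hand side $f\mapsto\la f,u_{ij}^{\overline{\al}}\ra$ is, by definition, the functional induced by $u_{ij}^{\overline{\al}}\in\LIQ$. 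Since the span of $\{\vphi_{kl}^\be\mid\be\in\Irr,\ 1\le k,l\le n_\be\}$ equals $\A\cdot\vphi$, which is norm-dense in $\LOQ$ by the density of $\A$ in $C(\G)$, it suffices to check that the two functionals agree on each $\vphi_{kl}^\be$.

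For the right-hand side, recall $u_{ij}^{\overline{\al}}=(u_{ij}^\al)^*$ and $\vphi_{kl}^\be=u_{kl}^\be\cdot\vphi$, so by traciality of $\vphi$ and the Peter--Weyl orthogonality relations,
\[\la\vphi_{kl}^\be,u_{ij}^{\overline{\al}}\ra=\vphi\big((u_{ij}^\al)^*u_{kl}^\be\big)=\vphi\big(u_{kl}^\be(u_{ij}^\al)^*\big)=\frac{\delta_{\al\be}\delta_{ik}\delta_{jl}}{n_\al}.\]
For the left-hand side, \lemref{l:matrixunits} identifies $n_\be\lm(\vphi_{kl}^\be)$ with the matrix unit $e_{kl}^\be$ in the $\be$-block of $\ell^\infty(\h{\G})=\prod_{\gamma}M_{n_\gamma}$; consequently $\lm(\vphi_{kl}^\be)=\frac{1}{n_\be}e_{kl}^\be$, whose $(i,j)$-entry in the $\al$-block is $\frac{1}{n_\be}\delta_{\al\be}\delta_{ik}\delta_{jl}=\frac{1}{n_\al}\delta_{\al\be}\delta_{ik}\delta_{jl}$, where the factor $\delta_{\al\be}$ permits replacing $n_\be$ by $n_\al$. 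The two expressions coincide, so the functionals agree on the dense subspace $\A\cdot\vphi$, and therefore everywhere.

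The only genuine bookkeeping is to match the two matrix conventions: the matrix-unit indexing of \lemref{l:matrixunits} must be aligned with the coordinate functional defining $\lm(f)^\al_{ij}$, and the conjugate representation $\overline{\al}$ on the right-hand side appears precisely because it is $(u_{ij}^\al)^*=u_{ij}^{\overline{\al}}$, rather than $u_{ij}^\al$ itself, that pairs nontrivially with $u_{kl}^\be\cdot\vphi$ under $\vphi$. I expect this indexing and conjugation tracking to be the main (though entirely routine) obstacle; once the orthogonality relations and traciality are invoked in the correct order, the identity falls out. A more computational alternative would be to slice the fundamental unitary directly, writing $\lm(f)=(f\ten\id)(W)$ and using the decomposition of $W$ into irreducible coefficients, but this requires the explicit Peter--Weyl form of $W$ and is less economical than leveraging \lemref{l:matrixunits}.
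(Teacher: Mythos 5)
Your proof is correct, and it takes a genuinely different route from the paper's. The paper works with a general $f=x\cdot\vphi$, $x\in\LIQ$: it expands $z_\al\lm(x\cdot\vphi)$ in the matrix units of \lemref{l:matrixunits}, and to identify the coefficients it must invoke the antipode identity of \lemref{l:antipodeformula2} and run a computation with $W^*$, $\Lphi$ and the orthogonality relations, before finishing with the density of $\LIQ\cdot\vphi$ in $\LOQ$. You instead treat both sides as bounded linear functionals of $f$ (which they are) and test them only on the matrix-coefficient functionals $\vphi^\be_{kl}$, where everything is immediate: \lemref{l:matrixunits} gives $\lm(\vphi^\be_{kl})=\frac{1}{n_\be}e^\be_{kl}$, hence the left-hand side, and traciality plus Peter--Weyl orthogonality gives the right-hand side. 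This bypasses \lemref{l:antipodeformula2} and the $W^*$-computation entirely, which is a real economy; and your reading of the indices is the correct one, since the paper's entries $\lm(f)^\al_{ij}$ are by definition the coefficients of $z_\al\lm(f)$ relative to the units $e^\al_{ij}=n_\al\lm(\vphi^\al_{ij})$, exactly as in equation (\ref{e:coefficients}). The price of your approach is that it needs the stronger density statement that $\A\cdot\vphi=\operatorname{span}\{\vphi^\be_{kl}\}$ is dense in $\LOQ$, whereas the paper only uses (and only asserts) density of $\LIQ\cdot\vphi$; your justification ``by the density of $\A$ in $C(\G)$'' is too quick as stated, since $C(\G)$ is merely weak* dense in $\LIQ$. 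The gap is routine to fill: by Cauchy--Schwarz, $\norm{y\cdot\vphi}_{\LOQ}=\sup_{\norm{z}\leq 1}|\vphi(zy)|\leq\norm{\Lphi(y)}_{\LTQ}$ for $y\in\LIQ$, so the Peter--Weyl fact that $\{\sqrt{n_\al}\Lphi(u^\al_{ij})\}$ is an orthonormal basis of $\LTQ$ shows that $\A\cdot\vphi$ is $\LOQ$-norm dense in $\LIQ\cdot\vphi$, which in turn is dense in $\LOQ$. With that one sentence added, your argument is complete and is arguably cleaner than the paper's.
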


\begin{proof} We first consider the case where $f=x\cdot\vphi$ for $x\in\LIQ$. By Lemma \ref{l:matrixunits}, we have 
\begin{equation}\label{e:coefficients}z_\al\lm(x\cdot\vphi)=n_\al\lm(\vphi^\al\star(x\cdot\vphi))
=\sum_{i,j=1}^{n_\al}\lm(x\cdot\vphi)^\al_{ij}n_\al\lm(\vphi_{ij}^\al).
\end{equation}
Since $\vphi^\al=\vphi^{\al^o}$, Lemma \ref{l:antipodeformula2} implies that
$\vphi^\al\star(x\cdot\vphi)=(x\star \vphi^{\al^*})\cdot\vphi$. But
$x\star\vphi^{\al^*}=(\vphi^{\al^*}\ten\id)\Gam(x)=\sum_{i,j=1}^{n_\al}c^\al_{ij}u_{ij}^\al$ for some set of coefficients
$c_{ij}^\al\in\C$. Applying $\Lphi$, we obtain
$$\sum_{i,j=1}^{n_\al}c^\al_{ij}\Lphi(u_{ij}^\al)=\Lphi((\vphi^{\al^*}\ten\id)\Gam(x))
=(\vphi^{\al^*}\ten\id)(W^*)\Lphi(x)=\lm(\vphi^\al)\Lphi(x).$$
It then follows form the orthogonality relations that
$$c_{ij}^\al=n_\al\la\lm(\vphi^\al)\Lphi(x),\Lphi(u_{ij}^\al)\ra=\la\Lphi(x),\Lphi(u_{ij}^\al)\ra.$$
Using equation (\ref{e:coefficients}) together with the linear independence of $\vphi_{ij}^\al$ in $\LOQ$, we obtain
$$\lm(x\cdot\vphi)^\al_{ij}=\la\Lphi(x),\Lphi(u_{ij}^\al)\ra=\la x\cdot\vphi,u_{ij}^{\overline{\al}}\ra.$$
Then general relation then follows from the density of $\LIQ\cdot\vphi$ in $\LOQ$.
\end{proof}

Note that Lemma \ref{l:matrixentries} implies the following formula for the left fundamental unitary:
\begin{equation}\label{e:W} W=\sum_{\al\in\Irr}\sum_{i,j=1}^{n_\al}u_{ij}^\al\ten e_{ij}^{\overline{\al}},\end{equation}
where the sum converges weak* in $\LIQ\oten\ell^{\infty}(\h{\G})$, and $e_{ij}^\al=n_\al\lm(\vphi_{ij}^\al)$ are the canonical matrix units.

\begin{thm}\label{t:H(C(G))}
Let $\G$ be a compact Kac algebra whose dual $\h{\G}$ is weakly amenable. Then under the convolution product
$$C(\G)^h\cong\ell^\infty(\h{\G}),$$
completely isometrically and weak*-weak* homeomorphically.
\end{thm}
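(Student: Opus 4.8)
The plan is to exhibit $(C(\G),\star)$ as a left $h$-module over itself and apply \propref{p:h-module-mb-ai}, but with the multiplication redirected onto the predual $L^1(\h{\G})=\ell^\infty(\h{\G})_*$ of the dual von Neumann algebra, in exactly the way multiplication was redirected onto $\max\ell^1(\bI)$ in \propref{p:l-1}. Concretely, I would produce a complete quotient map $m\colon C(\G)\hten C(\G)\to L^1(\h{\G})$ with $\mathrm{Ker}(m)=N_{C(\G)}$, whence the induced map $\widetilde m\colon C(\G)\hten_{C(\G)}C(\G)\to L^1(\h{\G})$ is a complete isometric isomorphism, and then dualize to obtain $C(\G)^h\cong L^1(\h{\G})^*=\ell^\infty(\h{\G})$ completely isometrically and weak*--weak* homeomorphically.

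The first role of weak amenability is to supply a \emph{cb-multiplier bounded approximate identity} for the convolution algebra. Weak amenability of $\h{\G}$ furnishes a net of finitely supported multipliers $(a_i)$, uniformly bounded in the cb-multiplier norm, whose associated convolution operators $T_{a_i}=a_i\star(\cdot)\colon C(\G)\to C(\G)$ are therefore uniformly completely bounded. Using $u^\al_{ij}\star u^\be_{kl}=\tfrac{\delta_{\al\be}\delta_{jk}}{n_\al}u^\al_{il}$, these operators converge on the finite-dimensional isotypic components of $\A$, hence (by uniform boundedness and density of $\A$) strongly to the identity on $C(\G)$; thus $(a_i)$ is a cb-multiplier bai for $(C(\G),\star)$. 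Since $C(\G)$ is an operator algebra under convolution by \propref{p:opalg}, it is a left $h$-module over itself by \cite[Theorem~5.2.1]{BL}, so \propref{p:h-module-mb-ai} applies and gives $\mathrm{Ker}(m_0)=N_{C(\G)}$, where $m_0\colon C(\G)\hten C(\G)\to C(\G)$ is the convolution multiplication.

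Next I would introduce $\Phi\colon C(\G)\ni x\mapsto\lm(x\cdot\vphi)\in\ell^\infty(\h{\G})$. By \lemref{l:matrixentries} it sends $u^\al_{ij}\mapsto\tfrac{1}{n_\al}e^\al_{ij}$, it is injective (as $\vphi$ and $\lm$ are faithful), and it intertwines convolution on $C(\G)$ with the product of $\ell^\infty(\h{\G})$. Setting $m:=\Phi\circ m_0$, the map lands in $L^1(\h{\G})$ because it carries $\A\ten\A$ into the algebraic direct sum $\bigoplus_\al M_{n_\al}\subseteq L^1(\h{\G})$, and injectivity of $\Phi$ gives $\mathrm{Ker}(m)=\mathrm{Ker}(m_0)=N_{C(\G)}$ from the previous step. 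The heart of the argument is then to show that $m$ is a complete quotient onto $L^1(\h{\G})$, equivalently that $m^*\colon\ell^\infty(\h{\G})\to (C(\G)\hten C(\G))^*\cong\Gam^{2,r}(C(\G),M(\G))$ is a complete isometry. Here I would decompose $L^1(\h{\G})=\bigoplus^{\ell^1}_\al (M_{n_\al})_*$ and realize each block as a Haagerup tensor product of a column and a row Hilbert space, so that the middle-index contraction in the convolution formula becomes precisely the Haagerup matrix inner product $\odot$. For $u$ in the unit ball of $M_n(L^1(\h{\G}))$ one writes, block by block, a matrix factorization $u^\al=C^\al D^\al$ and assembles a preimage $\eta=\xi\odot\zeta$ whose entries are built from the normalized coefficients $\sqrt{n_\al}\,u^\al_{ij}$, exactly as in the $\odot$-factorizations in the proof of \propref{p:C-R}; the norm bound $\norm{\xi}\,\norm{\zeta}<1$ follows from \lemref{l:rcfactorization} together with the orthonormality of $\{\sqrt{n_\al}\Lphi(u^\al_{ij})\}$.

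The step I expect to be the main obstacle is organizing these estimates so that they respect the operator space structure of the $\ell^1$-direct sum $L^1(\h{\G})$ uniformly over all $\al\in\Irr$: the matrix norms of $\bigoplus^{\ell^1}_\al (M_{n_\al})_*$ are not a naive sum of the block norms, so the block factorizations must be combined into a single $\eta$ of controlled norm. This is the genuinely new point beyond \propref{p:C-R}, where a single block occurs. Granting the complete quotient property, the equality $\mathrm{Ker}(m)=N_{C(\G)}$ makes $\widetilde m$ a completely isometric isomorphism $C(\G)\hten_{C(\G)}C(\G)\cong L^1(\h{\G})$; dualizing yields $C(\G)^h\cong\ell^\infty(\h{\G})$ completely isometrically, with the weak*--weak* homeomorphism automatic from the predual identification as in \propref{p:h-module-mb-ai}. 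Specializing to the two basic examples recovers $C(G)^h=VN(G)$ for weakly amenable compact $G$ and $C^*_\lm(G)^h=\ell^\infty(G)$ for weakly amenable discrete $G$.
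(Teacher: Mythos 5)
Your proposal shares the paper's global strategy --- use weak amenability to produce a cb-multiplier bounded approximate identity for $(C(\G),\star)$, identify the kernel of the multiplication map with $N_{C(\G)}$ via \propref{p:h-module-mb-ai} together with an injective intertwiner into the dual quantum group, exhibit a complete quotient map onto $\ell^1(\h{\G})$, and dualize --- and your kernel argument via $\Phi(x)=\lm(x\cdot\vphi)$ is essentially the paper's (there one has $\lm_*\circ m=m_\star$ with $\lm_*$ injective). However, there is a genuine gap exactly where you flag one, and it cannot simply be ``granted'': the entire content of the theorem is that convolution induces a completely bounded, indeed complete quotient, map onto $\ell^1(\h{\G})$ \emph{with its predual operator space structure}. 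Two things go wrong in your treatment. First, your $m=\Phi\circ m_0$ is a priori a cb map into $\ell^\infty(\h{\G})$; the inference that ``$m$ lands in $L^1(\h{\G})$ because it carries $\A\ten\A$ into $\bigoplus_\al M_{n_\al}$'' is invalid, since the only closure you control is with respect to the operator norm of the target, which yields $c_0(\h{\G})$, not $\ell^1(\h{\G})$; without an $\ell^1$-norm estimate you cannot even assert that $m$ takes values in the predual. Second, your plan for the quotient property --- blockwise factorizations of each $(M_{n_\al})_*$ as in \propref{p:C-R} --- founders on precisely the assembly problem you acknowledge: the matrix norms of $M_n\bigl(\bigoplus^1_\al(M_{n_\al})_*\bigr)$ are not computed block by block, so a family of blockwise $\odot$-factorizations does not combine into a single element of $M_n(C(\G)\hten C(\G))$ of controlled Haagerup norm. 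Conceding this step concedes the theorem.

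The paper overcomes both difficulties with two devices missing from your outline. It does not define $m$ through the convolution product at all; rather it sets
\[
m\colon C(\G)\hten C(\G)\xrightarrow{\Lphi\ten\Lphi}\LTQ_r\hten\LTQ_c\xrightarrow{U_r\ten U_c}\LTQ_r\hten\LTQ_c\cong\TCQ\longrightarrow\ell^1(\h{\G}),
\]
where $U_r,U_c$ permute the Peter--Weyl basis by index conjugation and the last arrow is restriction of normal functionals to $\ell^\infty(\h{\G})\subseteq\BLTQ$. Complete contractivity into the predual is then automatic and global: \lemref{l:rcfactorization} handles $\Lphi\ten\Lphi$, the identification of row $\hten$ column with the trace class is applied to all of $\LTQ$ at once (not blockwise), and restriction to a von Neumann subalgebra is a complete quotient at the predual level. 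For the quotient property the paper constructs an explicit completely contractive \emph{right inverse} $\Gam\circ\lm_*\colon\ell^1(\h{\G})\to C(\G)\hten C(\G)$, obtained as the adjoint of the complete contraction $m_{c_0}\circ(\lm\ten\lm)\colon\LOQ\hten\LOQ\to c_0(\h{\G})$, and verifies $m\circ(\Gam\circ\lm_*)=\id$ at every matrix level using the expansion $W=\sum_{\al}\sum_{i,j}u^\al_{ij}\ten e^{\overline{\al}}_{ij}$. This single right inverse is what treats all blocks $\al\in\Irr$ uniformly; it is the idea your blockwise approach is missing and the step your proposal would need in order to close the gap.
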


\begin{proof} Fixing the Peter-Weyl basis $(\xi_{ij}^\al):=(\sqrt{n_\al}\Lphi(u^\al_{ij}))$ of $\LTQ$, it follows that 
$$\LTQ_r\hten\LTQ_c\ni \xi_{ij}^\al\ten\xi_{kl}^\be\mapsto \om_{\xi_{kl}^\be,\xi_{ij}^\al}\in\TCQ$$
extends to a completely isometric isomorphism (see \cite[Corollary 5.11]{P}, for instance). Define unitaries $U_r,U_c:\LTQ\rightarrow\LTQ$ by
$$U_r\xi_{ij}^{\al}=\xi_{ij}^{\overline{\al}}, \ \ \ U_c\xi_{ij}^{\al}=\xi_{ji}^{\overline{\al}}, \ \ \ \al\in\Irr, \
1\leq i,j \leq n_\al.$$
By \cite[Theorem 3.4.1, Proposition 3.4.2]{ER}, $U_r$ and $U_c$ define complete contractions on $\LTQ_r$ and $\LTQ_c$, respectively. 
Let $m$ denote the following composition
$$C(\G)\hten C(\G)\xrightarrow{\Lphi\ten\Lphi}\LTQ_r\hten\LTQ_c\xrightarrow{U_r\ten U_c}\LTQ_r\hten\LTQ_c\cong\TCQ\rightarrow\ell^1(\h{\G}),$$
where the final map is given by restriction $\om\mapsto \om|_{\ell^\infty(\h{\G})}$. By Lemma~\ref{l:rcfactorization} and the above discussion, it follows that $m$ is a complete contraction satisfying 
$$m(u_{ij}^\al\ten u_{kl}^\be)=\frac{1}{\sqrt{n_\al n_\be}}\om_{\xi_{lk}^{\overline{\be}},\xi_{ij}^{\overline{\al}}}\bigg|_{\ell^\infty(\h{\G})}.$$
To examine the restricted vector functional, we let $f\in\LOQ$ and calculate
\begin{align*}\frac{1}{\sqrt{n_\al n_\be}}\om_{\xi_{lk}^{\overline{\be}},\xi_{ij}^{\overline{\al}}}(\lm(f)^*)&=\la\lm(f)^*\Lphi(u_{lk}^{\overline{\be}}),\Lphi(u_{ij}^{\overline{\al}})\ra=\la(f^*\ten\id)(W^*)\Lphi(u_{lk}^{\overline{\be}}),\Lphi(u_{ij}^{\overline{\al}})\ra\\
&=\la\Lphi((f^*\ten\id)\Gam(u_{lk}^{\overline{\be}})),\Lphi(u_{ij}^{\overline{\al}})\ra=\sum_{n=1}^{n_\be}\la f^*,u_{ln}^{\overline{\be}}\ra\la\Lphi(u_{nk}^{\overline{\be}}),\Lphi(u_{ij}^{\overline{\al}})\ra\\
&=\frac{\delta_{\al\be}\delta_{jk}}{n_\al}\la f^*,u_{li}^{\overline{\al}}\ra=\frac{\delta_{\al\be}\delta_{jk}}{n_\al}\overline{\la f,u_{li}^{\al}\ra}\\
&=\frac{\delta_{\al\be}\delta_{jk}}{n_\al}\overline{\lm(f)_{li}^{\overline{\al}}}=\frac{\delta_{\al\be}\delta_{jk}}{n_\al}(\lm(f)^*)_{il}^{\overline{\al}}\\
&=\frac{\delta_{\al\be}\delta_{jk}}{n_\al}\om_{il}^{\overline{\al}}(\lm(f)^*),
\end{align*}
where $\om_{il}^\al$ is the evaluation at the $\al$, $i,l$ entry. Since $f\in\LOQ$ was arbitrary, we obtain
$$m(u_{ij}^\al\ten u_{kl}^\be)=\frac{\delta_{\al\be}\delta_{jk}}{n_\al}\om_{il}^{\overline{\al}}.$$
Now, 
$$\LOQ\hten\LOQ\xrightarrow{\lm\ten\lm}c_0(\h{\G})\hten c_0(\h{\G})\xrightarrow{m_{c_0}}c_0(\h{\G})$$
is a complete contraction whose adjoint $\ell^1(\h{\G})\rightarrow\LIQ\ehten\LIQ$ equals $\Gam\circ \lm_*$, where 
$$\lm_*:\ell^1(\h{\G})\ni\hat{f}\mapsto(\id\ten\hat{f})(W)\in C(\G).$$
It follows that $\Gam\circ\lm_*$ maps $\ell^1(\h{\G})$ into $C(\G)\hten C(\G)$. Moreover, given $[\hat{f}_{rs}]\in M_n(\ell^1(\h{\G}))$, equation (\ref{e:W}) entails
$$[\Gam(\lm_*(\hat{f}_{rs}))]=[(\id\ten\hat{f}_{rs})(W_{13}W_{23})]=[\sum_{\al}\sum_{i,j=1}^{n_\al}\la\hat{f}_{rs},e_{il}^{\overline{\al}}\ra u_{ik}^\al\ten u_{kl}^\al],$$
where the sum representing each $r,s$ entry is convergent in the Haagerup norm. Thus,
$$m^{(n)}[\Gam(\lm_*(\hat{f}_{rs}))]=[\sum_{\al}\sum_{i,j=1}^{n_\al}\la\hat{f}_{rs},e_{il}^{\overline{\al}}\ra\om_{il}^{\overline{\al}}]=[\hat{f}_{rs}].$$
Since $\Gam\circ \lm_*$ is a complete contraction it follows that $m$ is a complete quotient map. Since $m$ is also balanced with respect to the $C(\G)$-module structure, it induces a complete quotient map 
$$\tilde{m}:C(\G)\hten_{C(\G)}C(\G)\rightarrow \ell^1(\h{\G}).$$

Now, since $\h{\G}$ is weakly amenable, the algebra $(C(\G),\star)$ has a (two-sided) completely bounded multiplier approximate identity. By Proposition \ref{p:h-module-mb-ai} it follows that $\mathrm{Ker}(m_\star)=N_{C(\G)}$, where $m_\star:C(\G)\hten C(\G)\rightarrow C(\G)$ is the convolution product. Lemma \ref{l:matrixentries} implies
$$\la\lm_*(\om_{il}^{\overline{\al}}),f\ra=\la \om_{il}^{\overline{\al}},\lm(f)\ra=\la u_{il}^{\al},f\ra,$$
so that $\lm_*(\om_{il}^{\overline{\al}})=u_{il}^{\al}$. We therefore have $\lm_*\circ m=m_\star$, i.e., the following diagram commutes

\begin{equation*}
\begin{tikzcd}
C(\G)\hten C(\G)\arrow[rr, "m_{\star}"]\arrow[rd, "m"] &   &C(\G) \\
&\ell^1(\h{\G}) \arrow[ru, "\lm_*"] &
\end{tikzcd}
\end{equation*}

Since $\lm_*$ is injective, it follows that $\mathrm{Ker}(m)=\mathrm{Ker}(m_\star)=N_{C(\G)}$. Hence, $\tilde{m}$ is also injective, and $C(\G)\hten_{C(\G)}C(\G)$ is completely isometrically isomorphic to $\ell^1(\h{\G})$. 

\end{proof}

As an immediate corollary, we can identify the Haagerup duals of the operator algebras $C(G)$ and $C^*_\lm(G)$ under convolution and pointwise product, respectively.

\begin{cor} Let $G$ be a compact group. Then for $C(G)$ under convolution we have
$$C(G)^h=VN(G).$$ 
\end{cor}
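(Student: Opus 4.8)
The plan is to obtain this as a direct specialization of \thmref{t:H(C(G))}, so that the work reduces to checking that a genuine compact group $G$ satisfies the hypotheses of that theorem and to identifying the objects concretely. First I would invoke item (1) of the Examples, which exhibits $\G=(\LI,\Gam,\vphi)$ with $\Gam(f)(s,t)=f(st)$ and $\vphi$ the normalized Haar measure as a compact Kac algebra with $C(\G)=C(G)$ and $\LOQ=\LO$ the classical convolution algebra. The same example identifies the dual object as the atomic group von Neumann algebra $\ell^\infty(\h{\G})=VN(G)=\prod_{\al\in\h{G}}M_{n_\al}$. I would also note that under these identifications the convolution product manufactured in \propref{p:opalg}, namely $a\star b=(R(a)\cdot\vphi\ten\id)\Gam(b)$, restricts to the usual convolution of continuous functions: for $f\in C(G)$ the unitary antipode is $R(f)(s)=f(s^{-1})$, and a short change-of-variables computation (using unimodularity of the compact group) shows $a\star b=a*b$. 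Thus ``$C(G)$ under convolution'' is precisely the operator algebra $(C(\G),\star)$ to which the theorem applies.

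The one genuine hypothesis to verify is weak amenability of $\h{\G}$. The plan is to argue that, because $G$ is a \emph{genuine} compact group, the counit $\ep(f)=f(e)$ is norm-continuous on $C(G)=C(\G)$, so $\G$ is coamenable; by the coamenability/amenability duality for compact quantum groups, the dual discrete quantum group $\h{\G}$ is then amenable, and amenable discrete quantum groups are weakly amenable (indeed with Cowling--Haagerup constant $1$). Alternatively, and more concretely, one can read off weak amenability from the block structure $\ell^\infty(\h{\G})=\prod_{\al}M_{n_\al}$: the central projections $z_\al=n_\al\lm(\vphi^\al)$ of \lemref{l:matrixunits} have finite partial sums $\sum_{\al\in F}z_\al$, indexed by finite $F\subseteq\Irr$, which furnish a net of finitely supported, completely contractive central multipliers converging to the identity, witnessing weak amenability with constant $1$.

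With both ingredients in hand, applying \thmref{t:H(C(G))} yields
$$C(G)^h=C(\G)^h\cong\ell^\infty(\h{\G})=VN(G),$$
completely isometrically and weak*-weak* homeomorphically, which is the claim. I expect the main (and essentially only) obstacle to be the careful justification of weak amenability of $\h{\G}$: one must be sure that the compactness of $G$ --- equivalently the coamenability of $\G$ forced by the genuine group structure --- suffices to place $\h{\G}$ in the weakly amenable class, since everything else is a bookkeeping translation through the Examples together with the convolution identification above.
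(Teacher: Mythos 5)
Your main route is correct and is essentially the paper's own: the paper offers no separate argument for this corollary, treating it as the specialization of \thmref{t:H(C(G))} to Example (1), exactly as you do — identify $C(\G)=C(G)$, $\ell^\infty(\h{\G})=VN(G)$, check the abstract convolution $a\star b=(R(a)\cdot\vphi\ten\id)\Gam(b)$ reduces to classical convolution, and verify weak amenability of $\h{\G}$. Your primary justification of that last hypothesis (boundedness of the counit $\ep(f)=f(e)$, hence coamenability of $\G$, hence amenability of $\h{\G}$ by the coamenability/amenability duality, hence weak amenability with constant $1$) is valid, though it leans on quantum-group machinery external to the paper. Note that what the theorem actually extracts from weak amenability is just a cb-multiplier bounded approximate identity for $(C(\G),\star)$, and for a genuine compact group this is elementary: normalized bump functions $e_i\in C(G)$ with $\norm{e_i}_{L^1}=1$ act by convolution as averages $\int_G e_i(s)\lm_s\,ds$ of complete isometries (translations) of $C(G)$, hence are cb multipliers of norm at most $1$, and they form an approximate identity. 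This is why the paper can call the corollary immediate.

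However, your ``alternative, more concrete'' verification of weak amenability is wrong, and you should not fall back on it. The finite partial sums $\sum_{\al\in F}z_\al$ are indeed completely contractive as elements (projections) of the von Neumann algebra $\ell^\infty(\h{\G})=VN(G)$, but that is not the norm relevant to weak amenability: what must be uniformly bounded is the \emph{completely bounded multiplier} norm, i.e.\ the cb norm of the induced map on the Fourier algebra of $\h{\G}$ (equivalently, of convolution against the Dirichlet-type kernel $\sum_{\al\in F}n_\al\chi^{\al}$ on $L^1(G)$ or $C(G)$). These sharp truncations are \emph{not} uniformly bounded: already for $G=\T$ one has $\Irr=\Z$, the truncation onto $F=\{-N,\dots,N\}$ is convolution with the Dirichlet kernel $D_N$, and its multiplier norm equals $\norm{D_N}_{L^1(\T)}\sim\log N\to\infty$. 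This is precisely the classical distinction between Dirichlet and Fej\'{e}r summation, and it is why weak amenability is a genuine hypothesis rather than a formal consequence of the block decomposition $\ell^\infty(\h{\G})=\prod_\al M_{n_\al}$. Keep your first argument (or the elementary bump-function argument above) and delete the second.
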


\begin{cor} Let $G$ be a weakly amenable discrete group. Then for $C^*_\lm(G)$ under pointwise multiplication we have
$$C^*_\lm(G)^h=\ell^\infty(G).$$
\end{cor}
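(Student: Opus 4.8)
The plan is to deduce this corollary directly from \thmref{t:H(C(G))} by specializing to the compact Kac algebra arising from a discrete group, as described in Example (2) of the preceding Examples environment. There, for a discrete group $G$, one sets $\G=(VN(G),\Gam,\vphi)$ with $\Gam(\lm(t))=\lm(t)\ten\lm(t)$ and $\vphi$ the canonical trace; this is a compact Kac algebra with $C(\G)=C^*_\lm(G)$ and the convolution product on $C(\G)$ coinciding with the pointwise product on $C^*_\lm(G)$. The dual object $\h{\G}$ is then the (genuine, classical) group $G$ viewed as a discrete quantum group, so that $\ell^\infty(\h{\G})=\ell^\infty(G)$.

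The key verification is that the hypothesis of \thmref{t:H(C(G))}, namely that $\h{\G}$ is weakly amenable, is exactly the assumption that $G$ is a weakly amenable discrete group. For a discrete group, weak amenability in the sense of Cowling--Haagerup --- the existence of an approximate identity in the Fourier algebra $A(G)$ that is uniformly bounded in the $\Mcb$ norm --- is precisely the condition that $(C(\G),\star)$ admits a completely bounded multiplier approximate identity, which is what the theorem uses. Thus the corollary's hypothesis translates directly into the theorem's hypothesis. First I would state that $G$ being weakly amenable is the discrete instance of $\h{\G}$ being weakly amenable, then invoke the theorem to conclude $C(\G)^h\cong\ell^\infty(\h{\G})$ completely isometrically and weak*-weak* homeomorphically.

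Finally, I would translate the conclusion back into classical notation. Under the identifications $C(\G)=C^*_\lm(G)$ (with convolution becoming pointwise multiplication) and $\ell^\infty(\h{\G})=\ell^\infty(G)$, the theorem yields
$$C^*_\lm(G)^h=\ell^\infty(G)$$
completely isometrically and weak*-weak* homeomorphically, as claimed. Since everything is a direct specialization, there is essentially no obstacle beyond correctly matching the dictionary of Example (2); the only point requiring a word of care is confirming that the \emph{convolution} product from \propref{p:opalg}, when specialized to this $\G$, is genuinely the \emph{pointwise} product on $C^*_\lm(G)$, which follows from $\Gam$ being the diagonal co-multiplication $\lm(t)\mapsto\lm(t)\ten\lm(t)$.
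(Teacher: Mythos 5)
Your proposal is correct and matches the paper's own treatment: the paper states this result as an immediate corollary of \thmref{t:H(C(G))}, obtained exactly by specializing to the compact Kac algebra $\G=(VN(G),\Gam,\vphi)$ of Example (2), under which $C(\G)=C^*_\lm(G)$ with convolution becoming pointwise multiplication, $\ell^\infty(\h{\G})=\ell^\infty(G)$, and weak amenability of $\h{\G}$ being precisely Cowling--Haagerup weak amenability of $G$ (i.e., an approximate identity of $A(G)$ bounded in the $\Mcb$ norm, which yields the completely bounded multiplier approximate identity for $(C(\G),\star)$ used in the theorem's proof). Your added verification that the diagonal co-multiplication $\lm(t)\mapsto\lm(t)\ten\lm(t)$ turns the abstract convolution $a\star b=(R(a)\cdot\vphi\ten\id)\Gam(b)$ into the pointwise product is exactly the right point of care and is consistent with the paper.
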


For a locally compact group $G$, there is a natural $A(G)$-bimodule structure on $C^*_\lm(G)$. One can then consider the Haagerup dual of $C^*_\lm(G)$ in the category of operator $A(G)$-modules. For simplicity we consider the case where $G$ is discrete. A similar argument can be used in the setting of compact Kac algebras.

\begin{prop}\label{p:H-A-C*}
Let $G$ be a weakly amenable discrete group. Then 
$$C^*_\lambda(G)^h= \ell_c^2(G)^*,$$
completely isometrically and weak*-weak* homeomorphically.
\end{prop}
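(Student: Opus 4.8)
The plan is to evaluate $C^*_\lambda(G)^h$ from its definition $C^*_\lambda(G)^h=(A(G)\hten_{A(G)}C^*_\lambda(G))^*$ and to show that the module Haagerup tensor product is completely isometrically $\ell^2_c(G)$; dualising then gives the statement. I view $\G=(VN(G),\Gam,\vphi)$ as a compact Kac algebra, so $A(G)=\LOQ$, $C^*_\lambda(G)=C(\G)$, $\Lphi(\lm(t))=\delta_t$ in $\ell^2(G)=\LTQ$, and the module action is the restriction of the canonical $\LOQ$-action, $u\star\lm(t)=u(t)\lm(t)$. The candidate map is $$m:A(G)\hten C^*_\lambda(G)\to\ell^2_c(G),\qquad m(u\ten x)=\Lphi(u\star x),$$ so that $m(\delta_s\ten\lm(t))=\delta_{st}\delta_t$. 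It is balanced over $A(G)$ and descends to $\tilde m:A(G)\hten_{A(G)}C^*_\lambda(G)\to\ell^2_c(G)$. That $m$ lands in the \emph{column} space and is a complete contraction follows from the column factorisation of $\Lphi$ (\lemref{l:rcfactorization}) together with the fact that $C^*_\lambda(G)$ is a left $h$-module over $A(G)$ (the multiplier action extends completely boundedly to the Haagerup tensor product, since $A(G)\subseteq\Mcb$). The balancing relation $u\ten\lm(s)\equiv u(s)\,\delta_s\ten\lm(s)\ (\mathrm{mod}\ N_{C^*_\lambda(G)})$ shows that $A(G)\hten_{A(G)}C^*_\lambda(G)$ is the closed span of the diagonal elements $e_s:=\delta_s\ten\lm(s)$, with $m(e_s)=\delta_s$.

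Weak amenability of $\h{\G}=G$ enters exactly as in \thmref{t:H(C(G))}: it supplies an approximate identity for $A(G)$ bounded in the $\Mcb$-norm. By \propref{p:h-module-mb-ai} this yields $\mathrm{Ker}(m_X)=N_{C^*_\lambda(G)}$ for the module map $m_X$, so that $A(G)\hten_{A(G)}C^*_\lambda(G)=A(G)\hten C^*_\lambda(G)/N_{C^*_\lambda(G)}$; since $\Lphi$ is injective on $C^*_\lambda(G)$ we get $\mathrm{Ker}(m)=N_{C^*_\lambda(G)}$ and hence $\tilde m$ injective. As a consistency check on the target I would also compute the dual side directly: by \eqref{eq:H-A-X-cr}, $C^*_\lambda(G)^h={}_{A(G)}\Gamma^{2,c}(C^*_\lambda(G),VN(G))$, and because $A(G)$ separates points of $G$ every such module map is diagonal, $\Phi(\lm(t))=c_t\lm(t)$; factoring $\Phi=S\circ\Lphi$ with $S:\ell^2_c(G)\to VN(G)$, $\delta_t\mapsto c_t\lm(t)$, identifies the admissible symbols with $\ell^2(G)$ carrying precisely the (row) structure of $\ell^2_c(G)^*$.

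The crux is to show $\tilde m$ is a complete isometry, equivalently that the linear map $\sigma:\ell^2_c(G)\to A(G)\hten_{A(G)}C^*_\lambda(G)$, $\delta_s\mapsto e_s$, is a complete contraction (then $m\circ\sigma=\id$ forces $m$ to be a complete quotient). The difficulty is genuine and is where weak amenability does the real work. At the level of the \emph{full} Haagerup tensor product no such contraction exists: any matrix inner product representing $\sum_s\alpha_s\delta_s\ten\lm(s)$ forces either the row $[\delta_s]$ to have infinite $M_{1,\infty}(A(G))$-norm or the column $[\lm(s)]$ to have infinite $M_{\infty,1}(C^*_\lambda(G))$-norm (e.g. when $F\le G$ is a finite subgroup, $\|\sum_{s\in F}|F|^{-1/2}\delta_s\ten\lm(s)\|_h$ is of order $|F|^{1/2}$, not $1$). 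One must therefore factorise \emph{modulo} $N_{C^*_\lambda(G)}$, where the balancing relation lets one replace the $A(G)$-factors by their values. The decisive estimate is then to bound the $M_{1,\infty}(A(G))$-norm of the resulting row of multipliers by its diagonal ($\ell^\infty$, i.e. $\Mcb$) norm; this is exactly what the cb-multiplier bounded approximate identity provides, via the Gilbert/Bo\.zejko--Fendler factorisation of its members, and it collapses the product of the row- and column-norms to $\|(\alpha_s)\|_2$ (the analogue of the computation $\|I_\infty\|=1$ in \propref{p:C-R}). Carrying this out completely, i.e. uniformly over all matrix levels, is the main obstacle; granting it, $\tilde m$ is a complete isometric isomorphism onto $\ell^2_c(G)$, and dualising gives $C^*_\lambda(G)^h\cong\ell^2_c(G)^*$ completely isometrically and weak*--weak* homeomorphically.
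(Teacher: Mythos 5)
Your proposal follows the paper's skeleton (the same map $m(u\ten x)=\Lphi(u\star x)$, a right inverse $\delta_s\mapsto\delta_s\ten\lm_s$, and weak amenability to identify the kernel with $N_{C^*_\lm(G)}$), but it breaks down at exactly the step you defer. The assertion on which your analysis of the ``crux'' rests is false: you claim that no complete contraction $\ell^2_c(G)\to A(G)\hten C^*_\lm(G)$, $\delta_s\mapsto\delta_s\ten\lm_s$, can exist at the level of the \emph{full} Haagerup tensor product, citing $\norm{\sum_{s\in F}|F|^{-1/2}\delta_s\ten\lm_s}_h\sim|F|^{1/2}$ for a finite subgroup $F$. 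In fact this norm is at most $1$. Writing $\sum_{s\in F}|F|^{-1/2}\delta_s\ten\lm_s=[\delta_s]_{s\in F}\odot[|F|^{-1/2}\lm_s]_{s\in F}$, the column has norm $\norm{\sum_{s\in F}|F|^{-1}\lm_s^*\lm_s}^{1/2}=1$ because $\lm_s^*\lm_s=1$, while the row $[\delta_s]_{s\in F}$ has $M_{1,\infty}(A(G))$-norm at most $1$: pairing it with $x\in VN(G)$ produces the row vector $(\la x\delta_e,\delta_s\ra)_s=\Lphi(x)\in\ell^2_r(G)$, of norm at most $\norm{x}_{VN(G)}$, and this persists at all matrix levels (it is the row analogue of \lemref{l:rcfactorization}, valid by traciality). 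Consequently $\psi(\xi)=\sum_g\xi_g\,\delta_g\ten\lm_g$ is a completely contractive right inverse of $m$ into the full Haagerup tensor product; this is precisely the paper's argument, and it requires neither weak amenability, nor working modulo $N_{C^*_\lm(G)}$, nor any Gilbert/Bo\.zejko--Fendler factorization. The ``main obstacle'' you identify does not exist, and since you only ``grant'' its resolution, your proposal never actually establishes that $m$ is a complete quotient map. Weak amenability is needed only afterwards, to show $\mathrm{Ker}(m)=N_{C^*_\lm(G)}$ and hence that $\tilde m$ is injective -- you have misplaced where the hypothesis does its work.

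A second, independent gap: you justify complete contractivity of $m$ by asserting that $C^*_\lm(G)$ is a left $h$-module over $A(G)$ ``since $A(G)\subseteq\Mcb$''. This is a non sequitur: that inclusion says each fixed $u\in A(G)$ acts completely boundedly on $C^*_\lm(G)$, which is far weaker than complete boundedness of the bilinear action relative to the Haagerup norm (a joint row/column estimate), and the paper never claims, let alone uses, this $h$-module property. Instead it factors $m$ as $A(G)\hten C^*_\lm(G)\xrightarrow{i_1\ten\Lphi}C_0(G)\hten\ell^2_c(G)\rightarrow\mc{K}(\ell^2(G))\hten\ell^2_c(G)\rightarrow\ell^2_c(G)$, exploiting that the action of $\mc{K}(\ell^2(G))$ on the column space $\ell^2_c(G)$ is completely contractive. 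The same unproven $h$-module property silently underlies your kernel step, where you apply \propref{p:h-module-mb-ai} directly to the module $C^*_\lm(G)$; the paper instead runs the approximate-identity argument through the genuine $h$-module $\ell^2_c(G)$ and uses injectivity of $\Lphi$ together with a commuting diagram to transfer injectivity back to $\tilde m$. Finally, your ``consistency check'' identifying ${}_{A(G)}\Gamma^{2,c}(C^*_\lm(G),VN(G))$ with diagonal symbols is a reasonable heuristic for the answer, but it carries no norm information and cannot substitute for either of the two missing estimates.
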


\begin{proof}
First we prove that the pointwise action $A(G) \otimes C^*_\lambda(G)\rightarrow C^*_\lm(G)$ extends to a complete contraction 
\[
m: A(G) \otimes^h C^*_\lambda(G) \rightarrow \ell^2_c(G).
\]
Note that the canonical inclusions $i_1: A(G) \rightarrow C_0(G)$ and $i_2: C_0(G) \rightarrow \mc{K}(\ell^2(G))$ are both complete contractions. Let $\Lphi$ be the map from Lemma~\ref{l:rcfactorization}. Then
\begin{equation}\label{eq:A-C0-K}
A(G) \otimes^h C^*_\lambda(G) \xrightarrow{i_1 \otimes \Lphi} C_0(G) \otimes^h \ell^2_c(G) \xrightarrow{i_2 \otimes \id} \mc{K}(\ell^2(G)) \otimes^h \ell^2_c(G).
\end{equation}
is a complete contraction. By \cite[Proposition~9.3.2]{ER} the dual pairing 
\[
\ell^2_c(G)^*  {\otimes}^h \ell^2_c(G)\cong \ell^2_c(G)^* \pten \ell^2_c(G) \rightarrow \Bbb{C}, \quad \bar{\eta} \otimes \xi \mapsto \langle \xi, \eta\rangle
\]
is completely contractive. By Propositions 9.2.7, 9.3.4,  and 9.3.5 in \cite{ER}, the composition 
\[
\mc{K}(\ell^2(G)) \otimes^h \ell^2_c(G)=(\ell_c^2(G) \otimes^h \ell^2_c(G)^*) \otimes^h \ell^2_c(G) = \ell_c^2(G) \otimes^h (\ell^2_c(G)^* \otimes^h \ell^2_c(G))\rightarrow \ell_c^2(G)
\]
is the canonical action of $\mc{K}(\ell^2(G))$ on $\ell^2(G)$. Composing with \eqref{eq:A-C0-K}, we see that the mapping
\begin{equation*}\label{eq:kappa}
m: A(G) \otimes^h C^*_\lambda(G) \rightarrow \ell^2_c(G), \quad u \otimes x \rightarrow \Lphi(u \cdot x)
\end{equation*}
is a complete contraction, where $u \cdot x$ is the pointwise action of $A(G)$ on $C^*_\lm(G)$.

Now, for each $\xi=(\xi_g)_{g \in G} \in \ell^2(G)$ define 
\[
\psi( \xi) = \sum_{g \in G} \xi_g \delta_g \otimes \lambda_g.
\]
We show that $\psi$ is a completely contractive right inverse to $m$. Let $I_{n}$ denote the $n \times n$ identity matrix and $e_{ij}$ the standard matrix units in $M_{n}$. For $\xi=[\xi_{ij}]\in M_n(\ell^2(G))$ we have
\begin{equation}\label{eq:main-thm-1}
\psi^{(n)}(\xi)= \left[ \sum_{g \in G} \xi_{ij}^g \delta_g \otimes \lambda_g \right]  = u\odot y,\end{equation}
where $u=\left( I_{n} \otimes [\; \cdots \; \delta_g\; \cdots \; ]_{g \in G}  \right)\in M_n(M_{1,\infty}(A(G)))$ and 
$$y=\sum_{i,j} e_{i,j} \otimes
\left[
 \begin{array}{c}
\vdots\\
\\
\xi_{i,j}^g \lambda_g\\
\\
\vdots
\end{array} 
\right]_{g \in G}\in M_n(M_{\infty,1}(C^*_\lm(G)).$$
To justify the inclusions, suppose $x=[x_{ij}]\in M_n(VN(G))$. Then
\[
\norm{ \la\la x, [\; \cdots \; \delta_g\; \cdots \; ]_{g \in G} \ra\ra }_{M_n(M_{1,\infty})}= \norm{ [ \Lphi(x_{ij})]}_{M_n(M_{1,\infty})}
\]
where we view $\Lphi(x_{ij})$ as the infinite row $[\Lphi(x_{ij})^g]_{g\in G}$. Hence,
\begin{align*}
\norm{ [ \Lphi(x_{ij})]}_{M_n(M_{1,\infty})}&=\norm{ [\Lphi(x_{ij})^g] [\Lphi(x_{ij})^g]^*}_{M_n}^{\frac{1}{2}}=
\left\| \left[ \sum_{k=1}^n \sum_{g \in G}  \Lphi(x_{ik})^g \overline{\Lphi(x_{kj})^g}\right] \right\|_{M_n}^{\frac{1}{2}}\\
&=\left\| \left[ \sum_{k=1}^n \langle \Lphi(x_{ik}),  \Lphi(x_{jk}) \rangle \right]  \right\|_{M_n}^{\frac{1}{2}}=\norm{ [\Lphi(x_{ij})]]}_{M_n(\ell^2_r(G))}\leq \norm{[x_{ij}]}_{M_n(VN(G))}.
\end{align*}
It follows that $\norm{u}_{M_n(M_{1,\infty}(A(G)))}\leq 1$.

Next,  
\begin{align*}
\norm{y}_{M_n(M_{\infty, 1}(C^*_\lambda(G)))}&=\norm{y^*y}_{M_n(M_{\infty, 1}(C^*_\lambda(G)))}^{\frac{1}{2}} =\left\| \left[ \sum_{k=1}^m \left[ \cdots \; \overline{\xi_{ki}^g} \lambda_{g^{-1}} \; \cdots \right] \left[
 \begin{array}{c}
\vdots\\
\\
\xi_{kj}^g \lambda_g\\
\\
\vdots
\end{array} 
\right]  \right] \right\|_{M_n(C^*_\lambda(G))}^{\frac{1}{2}}\\
&=\left\| \left[ \sum_{k=1}^m \sum_{g \in G} \xi_{kj}^g \overline{\xi_{ki}^g} \lambda_e \right] \right\|_{M_n(C^*_\lambda(G))}^{\frac{1}{2}}=\left\| \left[ \sum_{k=1}^m \langle \xi_{kj},  \xi_{ki}\rangle  \right] \otimes \lambda_e \right\|_{M_n(C^*_\lambda(G))}^{\frac{1}{2}}\\
&=\left\| \left[ \sum_{k=1}^m \langle \xi_{kj},  \xi_{ki}\rangle  \right] \right\|_{M_n}^{\frac{1}{2}} = \norm{\xi}_{M_n(\ell^2_c(G))}.
\end{align*}
Thus, 
\[
\norm{\psi^{(n)}(\xi)}_{M_{n}(A(G)\otimes^h C^*_\lambda(G))}=\norm{u\odot y}_{M_{n}(A(G)\otimes^h C^*_\lambda(G))}\leq \norm{\xi}_{M_{n}(\ell^2_c(G))}
\]
and therefore, $\psi$ is a complete contraction. By construction it is a right inverse to $m$, implying that $m$ is a complete quotient map. 
  
Now if $G$ is weakly amenable, then $A(G)$ has an approximate identity $(u_i)$ bounded in its cb-multiplier norm. By a similar argument to the proof of Proposition \ref{p:h-module-mb-ai} it follows that $\mathrm{Ker}(m)=N_{C^*_\lm(G)}$ and $\mathrm{Ker}(m_{\ell^2_c(G)})=N_{\ell^2_c(G)}$, where $m_{\ell^2_c(G)}:A(G)\hten\ell^2_c(G)\rightarrow \ell^2_c(G)$, 
$$N_{C^*_\lm(G)}=\la u\cdot v\ten x - u\ten v\cdot x\mid u,v\in A(G), \ x\in C^*_\lm(G)\ra$$
and similarly for $N_{\ell^2_c(G)}$. Let 
\begin{equation}\label{e:eqfinal}(\id\ten\Lphi):A(G)\hten_{A(G)}C^*_\lm(G)\rightarrow A(G)\hten_{A(G)} \ell^2_c(G)\end{equation}
denote the induced morphism. If $X\in A(G)\hten C^*_\lm(G)$ with $(\id\ten \Lphi)(X+N_{C^*_\lm(G)})=N_{\ell^2_c(G)}$, then $\widetilde{m_{\ell^2_c(G)}}((\id\ten\Lphi)(X+N_{C^*_\lm(G)}))=0$. But
$$X+N_{C^*_\lm(G)}=\lim_i(u_i\ten 1)X+N_{C^*_\lm(G)}=\lim_i u_i\ten m_{C^*_\lm(G)}(X)+N_{C^*_\lm(G)},$$
so that
\begin{align*}0&=\widetilde{m_{\ell^2_c(G)}}((\id\ten\Lphi)(X+N_{C^*_\lm(G)}))\\
&=\lim_i\widetilde{m_{\ell^2_c(G)}}((\id\ten\Lphi)(u_i\ten m_{C^*_\lm(G)}(X)+N_{C^*_\lm(G)}))\\
&=\lim_i \widetilde{m_{\ell^2_c(G)}}(u_i\ten\Lphi(m_{C^*_\lm(G)}(X)))\\
&=\lim_i \Lphi(u_i\cdot m_{C^*_\lm(G)}(X))\\
&=\Lphi(m_{C^*_\lm(G)}(X)).
\end{align*}
By injectivity of $\Lphi$, it follows that $X\in\mathrm{Ker}(m_{C^*_\lm(G)})=N_{C^*_\lm(G)}$. Hence, the map (\ref{e:eqfinal}) is injective. Since the diagram below commutes

\begin{equation*}
\begin{tikzcd}
A(G) \otimes^h_{A(G)} C^*_\lambda(G)\arrow[r, "\tilde{m}"]\arrow[d, "(\id\ten\Lphi)"]   &\ell^2_c(G) \\
A(G)\hten_{A(G)} \ell^2_c(G)\arrow[r, "\widetilde{m_{\ell^2_c(G)}}"]&\ell^2_c(G) \arrow[u, equals]
\end{tikzcd}
\end{equation*}
it follows that $\tilde{m}$ is injective. Thus, $A(G)\hten_{A(G)}C^*_\lm(G)\cong\ell^2_c(G)$ completely isometrically, and $C^*_\lambda(G)^h=\ell^2_c(G)^*$. 
\end{proof}

\section*{Acknowledgements}

The first author was partially supported by a Carleton-Fields Institute Postdoctoral Fellowship. The second author was partially supported by the NSERC Discovery Grant 1304873. The third author was partially supported by an NSERC Discovery Grant.

\end{spacing}


\begin{thebibliography}{00}


\bibitem{AC}
M. Alaghmandan and J. Crann, 
\textit{Character density in central subalgebras of compact quantum groups}. 
Canad. Math. Bull. 60 (2017), no. 3, 449-461.
 
\bibitem{ACN}
M. Alaghmandan, J. Crann, and M. Neufang,
\textit{Mapping ideals and multipliers for quantum groups}. 
arXiv:1803.08342.
 

 
\bibitem{B} D. P. Blecher,
\textit{A completely bounded characterization of operator algebras}. 
Math. Ann. 303 (1995), no. 2, 227-239. 
 
\bibitem{BL}
D. P. Blecher, C. Le Merdy,
\textit{Operator algebras and their modules: an operator space approach.}
London Mathematical Society Monographs. New Series, 30. Oxford Science Publications. The Clarendon Press, Oxford University Press, Oxford, 2004. 


\bibitem{BL2}
D. P. Blecher, C. Le Merdy,
\textit{On quotients of function algebras and operator algebra structures on $\ell_p$.}  
J. Operator Theory 34 (1995), no. 2, 315-346. 

\bibitem{BS}
D. P. Blecher, R. R. Smith,
\textit{The dual of the Haagerup tensor product.}
J. London Math. Soc. (2) 45 (1992), no. 1, 126-144.
  


 
\bibitem{CL}
J. Cigler,  V. Losert, P. Michor, 
\textit{Banach modules and functors on categories of Banach spaces}.
Lecture Notes in Pure and Applied Mathematics, 46. Marcel Dekker, Inc., New York, 1979.

\bibitem{C} J. Crann,
\textit{Inner amenability and approximation properties of locally compact quantum groups}.
Indiana Univ. Math. J. to appear. arXiv:1709.01770

\bibitem{D} M. Daws,
\textit{Multipliers, self-induced and dual Banach algebras}.  
Dissertationes Math. 470 (2010), 62 pp. 
 




\bibitem{ER} 
 E. G. Effros, Z.-J. Ruan,
 \textit{Operator spaces}. 
 London Mathematical Society Monographs. New Series, 23. The Clarendon Press, Oxford University Press, New York, 2000. 

 

\bibitem{Gil} J. E. Gilbert,
\textit{$L^p$-convolution operators and tensor products of Banach spaces I,II,III}.
preprints, 1973-1974.

\bibitem{G}
M. Grosser, 
\textit{Bidualr\"{a}ume und Vervollst\"{a}ndigungen von Banachmoduln.}
Lecture Notes in Mathematics, 717. Springer, Berlin, 1979.

\bibitem{K} J. Kustermans,
\e{Locally compact quantum groups in the universal setting}.
Internat. J. Math. {\bf 12}(2001), no.~3, 289--338.
 
 
\bibitem{KV1} J. Kustermans and S. Vaes,
\textit{Locally compact quantum groups}.
Ann. Sci. \'{E}cole Norm. Sup. (4) 33 (2000), no. 6, 837-934.


\bibitem{KV2} J. Kustermans and S. Vaes,
\textit{Locally compact quantum groups in the von Neumann algebraic setting}.
Math. Scand. 92 (2003), no. 1, 68-92.

\bibitem{NT2} S. Neshveyev and L. Tuset,
\e{Compact Quantum Groups and Their Representation Categories}.
Cours Sp\'{e}cialis\'{é}s--Collection SMF, 20, 2014.

\bibitem{P} G. Pisier, 
\textit{Introduction to operator space theory}.
London Mathematical Society Lecture Note Series, 294. Cambridge University Press, Cambridge, 2003.

\bibitem{Ri} M. A. Rieffel, 
\textit{Induced Banach representations of Banach algebras and locally compact groups}.
J. Funct. Anal. 1 (1967), 443-491.
 
 
 




\bibitem{V} N. Th. Varopoulos,
\textit{A theorem on operator algebras}. 
Math. Scand. 37 (1975), no. 1, 173-182.


\end{thebibliography}
\end{document}